\newtheorem{theorem}{Theorem}[section]
\newtheorem{remark}{Remark}[section]
\newtheorem{lemma}[theorem]{Lemma}
\newcommand{\bt}{\begin{theorem}}
\newcommand{\bl}{\begin{lemma}}
\newcommand{\el}{\end{lemma}}
\newcommand{\et}{\end{theorem}}
\newcommand{\la}{\label}
\newcommand{\bn}{\begin{eqnarray}}
\newcommand{\en}{\end{eqnarray}}
\newcommand{\bnn}{\begin{eqnarray*}}
\newcommand{\enn}{\end{eqnarray*}}
\newcommand{\ba}{\begin{aligned}}
\newcommand{\ea}{\end{aligned}}
\newcommand{\be}{\begin{equation}}
\newcommand{\ee}{\end{equation}}
\newcommand{\no}{\nonumber \\}
\renewcommand{\la}{\label}
\begin{document}

\title[Inhomogeneous incompressible Navier-Stokes equations]
   {Global wellposedness for the 3D inhomogeneous incompressible Navier-Stokes equations}
\author{Walter Craig}
\address{Department of Mathematics and Statistics, McMaster
  University, 1280 Main Street West, Hamilton, Ontario L8S 4K1,
  Canada} 
\email{craig@math.mcmaster.ca}
\author{Xiangdi Huang}
\address{Academy of Mathematics and Systems Science, CAS, Beijing
  100190, P. R. China \ \ $\&$\ \ Department of Pure and Applied
  Mathematics, Graduate School of Information Sciences and Technology,
  Osaka University, Osaka, Japan} 
\email{xdhuang@amss.ac.cn}
\author{Yun Wang}
\address{Department of Mathematics and Statistics, McMaster
  University, 1280 Main Street West, Hamilton, Ontario L8S 4K1,
  Canada} 
\email{yunwang@math.mcmaster.ca}
\date{}

\thanks{The research of WC is supported in part by a Killam Research Fellowship, the
Canada Research Chairs Program and NSERC through grant number
238452--11. The research of YW is supported in part by a Canada
Research Chairs Postdoctoral Fellowship at McMaster University.}

\maketitle

\begin{abstract}This paper addresses the three-dimensional
Navier-Stokes equations for an incompressible fluid whose density is
permitted to be inhomogeneous. We establish a theorem of global
existence and uniqueness of strong solutions for initial data with
small $\dot{H}^{\frac12}$-norm, which also satisfies a
natural compatibility condition.  A key point of the theorem is that
the initial density need not be strictly positive.  

\keywords{
Keywords: inhomogeneous incompressible fluids, strong solutions, vacuum.}

AMS: 35Q35, 35B65, 76N10
\end{abstract}


\section{Introduction}


In a number of important applications to fluid mechanics, such as in
geophysical fluid dynamics, the Navier-Stokes equations are called
upon to describe situations in which a fluid is inhomogeneous with
respect to density, however it is essentially incompressible to a
great extent. Such cases occur within the outer core and mantle of the
earth, for example, or else within the atmosphere in a regime of flows
for which dynamic effects of compressibility do not play a principle
r\^ole. Ignoring any possible buoyancy or other body forces, flows of
this character satisfy the following system of inhomogeneous
incompressible Navier-Stokes equations;  
\begin{equation}\label{ns}
  \left\{ \begin{array}{l}
     \partial_t\rho + {\rm div} (\rho u) = 0 ~, \\
     \partial_t(\rho u) + {\rm div}(\rho u\otimes u) - \mu \Delta u +
     \nabla P = 0 ~, \\
     {\rm div}~u = 0 ~.
  \end{array} \right.
\end{equation}
It is a standard mathematical model to pose the initial and boundary
value problem, giving the following data;   
\begin{equation}\label{initial}
   \begin{array}{c}
     (\rho, u)|_{t=0} = (\rho_0, u_0) \quad \mbox{in}\ \Omega~, \\
     u = 0 \quad \mbox{on}\ \partial \Omega~, \qquad u(x,
     \cdot)\rightarrow 0\ \quad \mbox{as}\ |x|\rightarrow
     \infty ~.
   \end{array} 
\end{equation}
Here $\rho$, $u$ and $P$ denote the density, velocity and pressure of
the fluid respectively, and $\mu$ is the positive viscosity, which is assumed to be
a constant. In this paper, we consider  $\Omega$ is either a smooth
bounded domain in $\mathbb{R}^3$ or the whole space $\mathbb{R}^3$.  
 
For the initial density with positive lower bound, the inhomogeneous
equations \eqref{ns} have been studied in the sixties and seventies by
the Russian school, see \cite{AKM, Kazhikov, LS} and the many references
that they contain. It was proved that a unique strong solution exists 
locally for arbitrary initial data. Moreover, these papers also establish
global wellposedness results for small solutions in dimension $N \ge
3$, while for the two dimensional case they establish the existence of 
large strong solutions. More recently, there have been many subsequent 
contributions to this theory, obtaining global wellposedness for 
initial data belonging to certain scale invariant spaces, see for example
\cite{AP,Danchin,GHZ}.   

On the other hand, for initial data which permits regions of vacuum,
{\it i.e.} regions where the density $\rho$ vanishes on some 
set, the problem becomes much more involved. Authors including 
DiPerna and Lions\cite{DL, Lions} prove the global existence of weak
solutions to \eqref{ns} in any space dimension, see also
\cite{Simon,Desjardins} and the references in their work. As pointed
in \cite{Choe-Kim}, the major difficulty for existence of strong
solutions which admit regions of vacuum is the lack of an appropriate
estimate for $\partial_t u$, since $\partial_t u$ only appears in the
momentum equation with a possibly degenerate coefficient $\rho$. One
way to avoid this is to estimate $\nabla \partial_tu$ instead, for which
one pays the price of being required to impose a compatibility condition. 
In fact, Choe-Kim\cite{Choe-Kim} prove the following theorem,  
\bt\label{thm-Choe-Kim} 
Assume that the initial data $(\rho_0, u_0) $ satisfies 
\begin{equation}
   0 \leq \rho_0 \in L^{\frac{3}{2}} \cap H^2~, \qquad 
     u_0 \in D_0^{1,2} \cap D^{2, 2}~,
\end{equation}
and the compatibility condition
\begin{equation}
    \mu \Delta u_0 - \nabla P_0 = \sqrt{\rho_0} g_0 ~, \qquad {\rm div}~u_0 =
    0 \quad \mbox{in} \ \Omega~, 
\end{equation}
for some $(P_0, g_0) \in D^{1, 2} \times L^2$. Then there exists a
time $T > 0$ and a unique strong solution $(\rho, u, P)$ to the
initial boundary value problem \eqref{ns}-\eqref{initial} satisfying  
\begin{equation}\no
   \begin{array}{c} 
     \rho \in L^\infty(0, T; L^{\frac32} \cap H^2), \quad u \in C([0, T]~;
       D^{1,2}_{0, \sigma}\cap D^{2, 2} ) \cap L^2(0, T; D^{3,2})~, \\ 
     \partial_t u \in L^2(0, T; D^{1,2}_{0, \sigma})~, \quad \mbox{and} \quad
     \sqrt{\rho} \partial_t u \in L^\infty(0, T; L^2)~.  
\end{array}
\end{equation}
\et

It is also interesting to study the regularity criterion for such
strong solutions, see\cite{Cho-Kim, Kim} and its references. In
particular, Kim\cite{Kim} proved that if $T^*$ is the blowup time of
a local strong solution, then necessarily 
\begin{equation}\label{blowup-Kim}
   \int_0^{T^*} \|u(s)\|_{L^{q, \infty}}^p \, ds = \infty~, \quad 
   \mbox{for any} \ (p, q)~, \quad \frac{2}{p} + \frac{3}{q} = 1 ~, 
   \ 3<q \leq \infty ~, 
\end{equation}
where $L^{q, \infty}$ is the usual weak Lebesgue space. 
This is effectively a criterion of regularity, and is similar to the
Serrin criterion\cite{Serrin} for {\it a priori} regularity condition
for the homogeneous Navier-Stokes equations. 

However, like the usual homogeneous Navier-Stokes equations, the
question of global existence of strong solutions remains open for
large initial data. A partial result due to Kim\cite{Kim} asserts that
if $\|u_0\|_{D^{1,2}}$ is small enough, then a strong solution exists
globally in time and is unique. In this paper, we generalize this 
result to  the case of data within a
class of critical Sobolev spaces.   

There is a scaling invariance of the system \eqref{ns}, namely
if $(\rho, u,  P)$ is a solution associated with the initial data
$(\rho_0, u_0)$, then  
\[
   (\rho_\lambda, \ u_\lambda, P_\lambda) = (\rho(\lambda^2t, \lambda x)~, 
   \lambda u (\lambda^2 t, \lambda x), \lambda^2 P(\lambda^2 t, \lambda x))
\] 
is a solution to \eqref{ns}-\eqref{initial} associated with the
initial data $(\rho_0(\lambda x), \lambda u_0(\lambda x))$. A scale
invariant function space $H^*$ is one for which 
$\|(\rho_\lambda, u_\lambda)\|_{H^*} = \|(\rho_1, u_1)\|_{H^*}$. For example, 
scaling invariance implies that 
\[
    \|u\|_{\dot{H}^{\frac12}} = \|u_\lambda\|_{\dot{H}^{\frac12}} ~, 
\] 
which is our principle example. Our main result in this paper
is that if $\|u_0\|_{\dot{H}^{\frac12}}$ is small enough, then
a strong solution exists globally in time. More precisely,  

\bt\label{main-result} Assume the conditions in Theorem~\ref{thm-Choe-Kim}. 
Then there exists a small constant $\varepsilon$ depending on $\bar{\rho}
= \|\rho_0\|_{L^\infty}$, $\mu$ and the domain $\Omega$, such that if  
\begin{equation}\label{small-condition}
   \|u_0\|_{\dot{H}^{\frac12}} \leq \varepsilon ~, 
\end{equation}
then the unique local strong solution constructed in Theorem
\ref{thm-Choe-Kim} exists globally in time. Moreover, this global
strong solution satisfies the following decay properties: 
\be \label{decay}
   \|\nabla u\|_{L^2} \leq M t^{-\frac12} ~,
\ee
where $M$ depends on $\mu$, $\Omega$, $\bar{\rho}$, and $\int \rho_0
|u_0|^2 dx$.  
\et

The global result above relies on the smallness of the scale invariant
norm of the initial data. In terms of the well known result of
Fujita \& Kato\cite{Fujita-Kato} for the homogeneous Navier-Stokes 
equations, our result can be considered as its generalization to the
inhomogeneous case. The homogeneous Navier-Stokes equations have been
a much more active topic of inquiry in this direction, see 
\cite{Cannone,CheminGallagherPaicu,Koch-Tataru} and their references. 
The key ideas leading to these
results come from a variety of refined estimates of Stokes semigroup 
combined with the contraction principle for a Picard iteration scheme. 
However, in the present work one can not directly apply the same methods, 
due to the presence of the additional factor $\rho$ before the term 
$\partial_t u$. Instead we pursue a different strategy of proof, based 
on the method of energy estimates, and using the parabolic property of 
the system of equations. The main idea goes as follows: we first assume 
that the scale-invariant quantity $\|\nabla u\|_{L^4_t L^2_x}$ is less than an {\it a priori} 
constant bound $2$; then using this assumption we then prove, under the 
smallness assumption on the initial data, that in fact the quantity 
is less than $1$. Since $\|\nabla u\|_{L^4_t L^2_x}$ is initially $0$, which is less than $1$, 
then it remains less than $1$. On the other hand, the
boundedness of $\|\nabla u\|_{L^4_t L^2_x}$ implies the local solution can be extended, according to 
some a priori estimates. 

The proof in this paper is inspired by a similar result for 3D
compressible Navier-Stokes equations due to Huang, Li \& Xin\cite{HLX}, 
in which they proved the global wellposedness of classical solutions
with small initial $H^{\frac12+ \varepsilon}$-norm.    

\noindent
{\it Remarks:} Compared to the global existence results in 
\cite{AP,Danchin,GHZ}, for which the initial density is required to be
a small perturbation of a constant, our result does not require an 
assumption of smallness on the density variations, and in fact it 
allows for the presence of regions of vacuum.  

In the case of periodic boundary conditions on a torus ${\mathbb T}^d$, 
the same result can be proved if the intial data has zero momentum,
following the strategy as for the case of a bounded domain.  
Indeed, the same conclusion holds for classical solutions if the
initial data are regular enough and satisfy some higher-order
compatibility conditions. The proof of this statement is almost the
same except that some additional high-order estimates are
required. These estimates have been given in \cite{Kim}.   
In the case in which $\Omega$ is a smooth bounded domain in $\mathbb{R}^2$,
it is already known that global strong solutions exist without a smallness
assumption; this was proved first for the case without vacuum
\cite{AKM} and subsequently for the case with vacuum\cite{Huang-Wang}.  

Our result holds in particular in the case of a constant density $\rho = 1$. 
It therefore gives a new proof of the well known Fujita-Kato theorem \cite{Fujita-Kato} ,
which uses only energy estimate methods in the argument.
\begin{theorem}\label{F-K}
Assume $u_0\in H^1(\mathbb{R}^3)$ and ${\rm div}~u_0 = 0$. There
exists a positive constant $\varepsilon$, such that if
$\|u_0\|_{\dot{H}^{\frac12}} \leq \varepsilon$, then  
there is a global strong solution to the homogeneous Navier-Stokes equations with the property
that 
\[
   u\in C([0, \infty); H^1) \cap L^2_{loc}(0, \infty; H^2) ~.
\]
\end{theorem}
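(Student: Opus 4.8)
The plan is to derive Theorem~\ref{F-K} as a special case of Theorem~\ref{main-result} by specializing to $\rho_0 \equiv 1$ on $\Omega = \mathbb{R}^3$. First I would verify the hypotheses of Theorem~\ref{thm-Choe-Kim}: with $\rho_0 \equiv 1$ one has $\rho_0 \in L^{3/2}\cap H^2$ trivially in the whole-space sense used there (or one works modulo the constant), $u_0 \in D_0^{1,2}\cap D^{2,2}$ follows from $u_0 \in H^1$ together with the equation for $u_0$, and the compatibility condition $\mu\Delta u_0 - \nabla P_0 = \sqrt{\rho_0}\,g_0 = g_0$ is satisfied by \emph{defining} $P_0$ as the solution of the Neumann problem $\Delta P_0 = \mu\,\mathrm{div}\,\Delta u_0 = 0$ (since $\mathrm{div}\,u_0 = 0$), so $P_0 \equiv 0$ and $g_0 := \mu\Delta u_0 \in L^2$. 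Thus $(P_0,g_0) \in D^{1,2}\times L^2$ as required.

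Next, since $\|u_0\|_{\dot H^{1/2}} \le \varepsilon$ with $\varepsilon$ the constant from Theorem~\ref{main-result} (which, for $\rho_0 \equiv 1$, depends only on $\mu$ and $\mathbb{R}^3$), Theorem~\ref{main-result} yields a global strong solution $(\rho,u,P)$ of \eqref{ns}--\eqref{initial}. The key observation is that with initial density identically $1$ and $\mathrm{div}\,u = 0$, the continuity equation $\partial_t\rho + u\cdot\nabla\rho = 0$ forces $\rho(t,\cdot) \equiv 1$ for all $t$: the density is transported by the divergence-free flow of $u$ and is initially constant, so it stays constant. (This can be made rigorous via the renormalized-solution theory of DiPerna--Lions, given the regularity of $u$ already obtained, or directly from the a priori bounds.) Consequently $(u,P)$ solves the \emph{homogeneous} incompressible Navier-Stokes equations globally.

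It then remains to upgrade the regularity class. From Theorem~\ref{main-result} we already have, with $\rho \equiv 1$, that $\sqrt\rho\,\partial_t u = \partial_t u \in L^\infty(0,T;L^2)$ for every $T$, $u \in L^2_{\mathrm{loc}}(0,\infty; D^{3,2})$, and $\nabla u \in L^\infty$ locally in time away from $0$ with the decay \eqref{decay}. Standard elliptic regularity for the Stokes system ($-\mu\Delta u + \nabla P = -\partial_t u - u\cdot\nabla u$, $\mathrm{div}\,u = 0$) then gives $u \in L^2_{\mathrm{loc}}(0,\infty; H^2)$, and combined with $\partial_t u \in L^2_{\mathrm{loc}}$ (which follows from the energy estimate for $\sqrt\rho\,\partial_t u$ plus $\partial_t u \in L^2(0,T;D^{1,2}_{0,\sigma})$ in Theorem~\ref{thm-Choe-Kim}), the Aubin--Lions lemma yields $u \in C([0,\infty); H^1)$. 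This establishes the stated regularity $u \in C([0,\infty);H^1)\cap L^2_{\mathrm{loc}}(0,\infty;H^2)$.

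The only genuinely delicate point is justifying that $\rho \equiv 1$ is propagated, i.e.\ uniqueness/rigidity at the level of the transport equation with the limited regularity of $u$; but this is exactly the kind of statement handled by the DiPerna--Lions renormalization theory and is in fact implicit in the uniqueness part of Theorem~\ref{thm-Choe-Kim}, since $(\rho\equiv 1, u, P)$ and the solution produced by Theorem~\ref{main-result} share the same initial data and both lie in the uniqueness class. Everything else is a routine translation of the already-established estimates into the $H^1$/$H^2$ language, so there is no substantial new obstacle.
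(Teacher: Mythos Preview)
Your strategy---specialize to $\rho_0\equiv 1$---is exactly how the paper presents Theorem~\ref{F-K}, but as written your argument has a genuine gap at the level of hypotheses.

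Theorem~\ref{main-result} assumes the conditions of Theorem~\ref{thm-Choe-Kim}, namely $\rho_0\in L^{3/2}\cap H^2$, $u_0\in D_0^{1,2}\cap D^{2,2}$, and the compatibility condition with $g_0\in L^2$. Two of your verifications fail. First, $\rho_0\equiv 1\notin L^{3/2}(\mathbb{R}^3)$; you flag this but ``works modulo the constant'' is not a proof. More seriously, you claim ``$u_0\in D_0^{1,2}\cap D^{2,2}$ follows from $u_0\in H^1$ together with the equation for $u_0$''---but there is no equation for $u_0$; it is just initial data, and $u_0\in H^1$ certainly does not force $\nabla^2 u_0\in L^2$. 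For the same reason your choice $g_0:=\mu\Delta u_0$ need not lie in $L^2$. So Theorem~\ref{main-result} cannot be invoked as a black box under the sole hypothesis $u_0\in H^1$.

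The paper's intended reading is that one re-runs the \emph{argument} (not the statement) of Theorem~\ref{main-result} with $\rho\equiv 1$. In the homogeneous case local existence in $C([0,T];H^1)\cap L^2(0,T;H^2)$ from $H^1$ data is classical and requires no compatibility condition, and the key continuation ingredient---Lemma~\ref{lemma-3-3}, the interpolation estimate $\|\nabla u\|_{L^4_tL^2_x}\le C_1\|u_0\|_{\dot H^{1/2}}$ under $A(T)\le 2$---uses only the $L^2$ and $\dot H^1$ energy bounds and hence only $u_0\in H^1$. The bootstrap of Theorem~\ref{keypro} then closes exactly as in Section~3. Alternatively, you can repair your black-box route by approximating $u_0$ by divergence-free $u_0^\delta\in H^2$ (so the Choe--Kim hypotheses hold trivially with $g_0^\delta=\mu\Delta u_0^\delta$), applying Theorem~\ref{main-result}, noting that the bounds of Lemma~\ref{lemma-3-3} and \eqref{3-3-8} depend only on $\|u_0^\delta\|_{H^1}$ and $\|u_0^\delta\|_{\dot H^{1/2}}$, and passing to the limit $\delta\to 0$. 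Either way, the step you currently present as immediate is not.
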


This article is organized as follows: section 2 sets the notation, and
contains several definitions and basic lemmas. In section 3 we give the
proof for the case $\Omega=\mathbb{R}^3$, while the proof for the case
of a bounded domain is presented in section 4. 

\section{Notation and interpolation lemmas}

The homogeneous and inhomogeneous Sobolev spaces are defined in the
standard way. For $1 \leq r \leq \infty$ and $k\in \mathbb{N}$, 
\[
  \begin{array}{l}
   L^r = L^r(\Omega), \\ 
   D^{k, r}= \{ u \in L_{loc}^1 : \ 
      \|\nabla^k u\|_{L^r} < \infty \}~, \quad \|u\|_{D^{k, r}} =
      \|\nabla^k u\|_{L^r} ~,  \\ 
   W^{k, r} = L^r\cap D^{k, r} ~, \quad H^k = W^{k, 2} ~, \quad 
   D_0^{k, r} = \overline{C_0^\infty} \ \mbox{closure in the norm of} \ D^{k, r}~.
   \end{array}
\]
In particular when $\Omega$ is a bounded domain then $D_0^{1, 2} = H_0^1$. Set 
\[
   C^\infty_{0, \sigma} = \{u \in C_0^\infty:\ {\rm div}~u =0\} ~,
\]
and set $D_\sigma^{1,2}= \overline{C_{0, \sigma}^\infty} $, with the closure
taken in the norm of $D^{1,2}$.  The fractional-order homogeneous Sobolev
space $\dot{H}^s(\mathbb{R}^3)$ are defined as the space of tempered
distributions $u$ over $\mathbb{R}^3$  for which the Fourier transform
$\mathcal{F}u$ belongs to $L_{loc}^1(\mathbb{R}^3)$ and which satisfy
\be \nonumber 
    \|u\|_{\dot{H}^s(\mathbb{R}^3)}^2 := \int_{\mathbb{R}^3}
    |\xi|^{2s} |\mathcal{F} u(\xi)|^2 d\xi < \infty ~. 
\ee
And $\dot{H}^s(\Omega)$ is the restriction of $\dot{H}^s(\mathbb{R}^3)$ to the domain $\Omega$.
We refer to $\dot{H}^s(\Omega)$ by $\dot{H}^s$, independent of whether $\Omega$ is the whole space or not.

Some well known interpolation results which relate Lorentz spaces and 
the classical Sobolev spaces are presented, which can be found in \cite{BL, Tartar,
Tribel}. For every $0< \theta <1 $, $1 \leq q \leq \infty$, and
normed spaces $X_0, X_1$,  we use the notation $[X_0, X_1]_{\theta,q}$ 
to denote the real interpolation space between $X_0$ and $X_1$. 
\begin{lemma}\label{lemma-Besov-interpolation}Let $s, s_0, s_1 \in
    \mathbb{R}$.  If $s = (1-\theta) s_0 + \theta s_1$ with some $0<\theta<1$, then
  \be \nonumber
    [\dot{H}^{s_0}, \dot{H}^{s_1} ]_{\theta, 2} =\dot{H}^s.
  \ee
And the following inclusion relation holds, if $1\leq q_1 \leq q_2\leq  \infty$, then
\be  \nonumber
 [\dot{H}^{s_0}, \dot{H}^{s_1}]_{\theta, q_1} \subseteq  [\dot{H}^{s_0}, \dot{H}^{s_1}]_{\theta, q_2}. 
\ee
\end{lemma}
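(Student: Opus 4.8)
The plan is to prove both assertions by transporting the problem to the Fourier side, where $\dot{H}^s$ becomes a weighted $L^2$ space, and then to compute the relevant $K$-functional explicitly. First I would use the Plancherel theorem to identify, for each $s\in\mathbb{R}$, the space $\dot{H}^s$ with the weighted space $L^2(|\xi|^{2s}\,d\xi)$ through the isometry $u\mapsto \mathcal{F}u$. Since a surjective isometry of compatible couples commutes with every interpolation functor, it suffices to establish
\[[L^2(w_0^2\,d\xi),\,L^2(w_1^2\,d\xi)]_{\theta,2}=L^2(w_\theta^2\,d\xi),\qquad w_i=|\xi|^{s_i},\ \ w_\theta=w_0^{1-\theta}w_1^{\theta}=|\xi|^{s},\]
together with the stated monotonicity for the pair $(L^2(w_0^2),L^2(w_1^2))$. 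After the pointwise substitution $g=w_0\,\mathcal{F}u$ one reduces further to the model couple $(L^2(d\xi),L^2(v^2\,d\xi))$ with $v=w_1/w_0=|\xi|^{s_1-s_0}$, for which I must show $[L^2,L^2(v^2)]_{\theta,2}=L^2(v^{2\theta})$.

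For this model couple I would use the quadratic $K$-functional $K_2(t,g)=\inf_{g=g_0+g_1}(\|g_0\|_{L^2}^2+t^2\|g_1\|_{L^2(v^2)}^2)^{1/2}$, which is equivalent to the usual $K$-functional and defines the same real interpolation norm. Because the two norms decouple over frequencies, the infimum can be taken pointwise in $\xi$: minimizing $|g_0|^2+t^2v^2|g_1|^2$ under $g_0+g_1=g$ gives $K_2(t,g)^2=\int \frac{t^2v^2}{1+t^2v^2}\,|g|^2\,d\xi$. Inserting this into $\|g\|_{\theta,2}^2\simeq\int_0^\infty t^{-2\theta}K_2(t,g)^2\,\frac{dt}{t}$ and exchanging the order of integration by Tonelli, the inner $t$-integral is evaluated by the substitution $\tau=tv$, producing the convergent Beta-type integral $c_\theta=\int_0^\infty \frac{\tau^{1-2\theta}}{1+\tau^2}\,d\tau<\infty$ (finite precisely because $0<\theta<1$) times $v^{2\theta}$. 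Hence $\|g\|_{\theta,2}^2\simeq c_\theta\int v^{2\theta}|g|^2\,d\xi$, which is the norm of $L^2(v^{2\theta})$; undoing the reductions yields $[\dot{H}^{s_0},\dot{H}^{s_1}]_{\theta,2}=\dot{H}^{s}$ with equivalent norms.

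For the inclusion relation I would argue at the abstract level of any couple, using only the two monotonicity properties of the $K$-functional: $t\mapsto K(t,f)$ is nondecreasing and $t\mapsto K(t,f)/t$ is nonincreasing. From these, for fixed $t_0$ one has $K(t,f)\ge K(t_0,f)$ on $t\ge t_0$ and $K(t,f)\ge (t/t_0)K(t_0,f)$ on $t\le t_0$; integrating $t^{-q_1\theta}K(t,f)^{q_1}$ over a fixed dyadic window around $t_0$ and using $0<\theta<1$ gives the pointwise bound $\sup_t t^{-\theta}K(t,f)\le C\|t^{-\theta}K(\cdot,f)\|_{L^{q_1}(dt/t)}$. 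Then for $q_1\le q_2$ the elementary interpolation $\int g^{q_2}\frac{dt}{t}\le \|g\|_\infty^{q_2-q_1}\int g^{q_1}\frac{dt}{t}$ with $g(t)=t^{-\theta}K(t,f)$ yields $\|f\|_{\theta,q_2}\le C\|f\|_{\theta,q_1}$, i.e. the claimed embedding.

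The main obstacle is not these computations, which are standard, but the functional-analytic care required by the homogeneous spaces themselves: with the distributional definition used here one must ensure that $(\dot{H}^{s_0},\dot{H}^{s_1})$ is an admissible (compatible) couple, that $\mathcal{F}$ is genuinely an isometric isomorphism onto the weighted $L^2$ spaces on all of $\dot{H}^{s_i}$, and that the intersection $\dot{H}^{s_0}\cap\dot{H}^{s_1}$ is dense, so that the $K$-method applies without the usual completeness and polynomial ambiguities of homogeneous Sobolev spaces. Once the couple is set up correctly on the Fourier side as weighted $L^2$ spaces, the equivalence $K_2\simeq K$ and the Fubini exchange are routine, and the two assertions follow as above.
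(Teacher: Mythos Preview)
Your argument is correct. Note, however, that the paper does not give its own proof of this lemma: it is stated as a known interpolation fact with a citation to the standard references \cite{BL, Tartar, Tribel}. What you have written is precisely the textbook route found in those references---reduce $\dot{H}^s$ to a weighted $L^2$ space on the Fourier side, compute the $K$-functional for a pair of weighted $L^2$ spaces by pointwise minimization, and evaluate the resulting Beta-type integral; the embedding $[\,\cdot\,,\,\cdot\,]_{\theta,q_1}\hookrightarrow[\,\cdot\,,\,\cdot\,]_{\theta,q_2}$ is the general monotonicity of real interpolation in the second index, and your Chebyshev-type bound $\|f\|_{\theta,\infty}\le C\|f\|_{\theta,q_1}$ followed by $\ell^{q_1}\cap\ell^\infty\hookrightarrow\ell^{q_2}$ is the standard proof. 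Your closing caveat about setting up $(\dot{H}^{s_0},\dot{H}^{s_1})$ as a compatible couple (avoiding the polynomial ambiguity for large $s$) is the only genuinely delicate point, and it is handled in the cited sources; once that is granted, nothing in your computation requires change.
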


\begin{lemma}\label{lemma-Lorentz-interpolation}Let $0<\theta <1$,
  $1<p_0, p_1, p < \infty$, $1\leq q_0, q_1, q\leq \infty$, and
  $\frac{1}{p} = \frac{1-\theta}{p_0} + \frac{\theta}{p_1}$, then  
  \be \label{Lorentz-interpolation}
    [L^{p_0, q_0}, L^{p_1, q_1}]_{\theta, q} = L^{p, q} ~.
  \ee
  This identification of the interpolation spaces
  \eqref{Lorentz-interpolation} is also valid if  $L^{p_1, q_1}$ is
  replaced by by $L^\infty$ ($L^{p_0, q_0}$, $L^{p_1, q_1}$ and $L^{p,
  q}$ are Lorentz spaces, whose definition can be found in
  \cite{BL}). When $q = p$, $L^{p, p}$ is 
  the Sobolev $L^p$ space.  
\end{lemma}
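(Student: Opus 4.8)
The plan is to deduce the identity \eqref{Lorentz-interpolation} from two classical ingredients of real interpolation theory, both of which may be found in \cite{BL}: first, the fact that the Lorentz spaces are themselves real interpolation spaces of the Lebesgue scale, and second, the reiteration (stability) theorem of Lions and Peetre. Concretely, I would fix auxiliary Lebesgue exponents $1 \le a_0 < a_1 \le \infty$ chosen so that $p_0$ and $p_1$ both lie strictly between $a_0$ and $a_1$, and recall the identification
\be \nonumber
   [L^{a_0}, L^{a_1}]_{\alpha, q} = L^{p, q}, \qquad \frac{1}{p} = \frac{1-\alpha}{a_0} + \frac{\alpha}{a_1},
\ee
valid for every $0 < \alpha < 1$ and $1 \le q \le \infty$, with the convention $1/a_1 = 0$ when $a_1 = \infty$. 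This identification is in turn a consequence of Holmstedt's formula for the $K$-functional of the couple $(L^{a_0}, L^{a_1})$, namely the comparability of $K(t, f; L^{a_0}, L^{a_1})$ with a truncated decreasing-rearrangement expression in $f^*$; in the special case $(a_0, a_1) = (1, \infty)$ it reduces to the elementary formula $K(t, f; L^1, L^\infty) = \int_0^t f^*(s)\, ds$.

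With this in hand, I would choose $\alpha_0, \alpha_1 \in (0,1)$ by
\be \nonumber
  \frac{1}{p_0} = \frac{1-\alpha_0}{a_0} + \frac{\alpha_0}{a_1}, \qquad \frac{1}{p_1} = \frac{1-\alpha_1}{a_0} + \frac{\alpha_1}{a_1},
\ee
so that $L^{p_0, q_0} = [L^{a_0}, L^{a_1}]_{\alpha_0, q_0}$ and $L^{p_1, q_1} = [L^{a_0}, L^{a_1}]_{\alpha_1, q_1}$. Since $p_0 \neq p_1$ we have $\alpha_0 \neq \alpha_1$, so the reiteration theorem applies and yields
\be \nonumber
  [L^{p_0, q_0}, L^{p_1, q_1}]_{\theta, q} = \big[[L^{a_0}, L^{a_1}]_{\alpha_0, q_0}, [L^{a_0}, L^{a_1}]_{\alpha_1, q_1}\big]_{\theta, q} = [L^{a_0}, L^{a_1}]_{\beta, q}, \qquad \beta = (1-\theta)\alpha_0 + \theta \alpha_1.
\ee
A direct computation then gives $\frac{1}{p} := \frac{1-\beta}{a_0} + \frac{\beta}{a_1} = (1-\theta)\frac{1}{p_0} + \theta \frac{1}{p_1}$, which is exactly the prescribed relation among $p, p_0, p_1$; applying the identification once more identifies the right-hand side with $L^{p, q}$, which proves \eqref{Lorentz-interpolation}.

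For the endpoint assertion, in which $L^{p_1, q_1}$ is replaced by $L^\infty$, I would run the same argument with the base couple $(L^{a_0}, L^\infty)$, so that $a_1 = \infty$ and $L^{p_0, q_0} = [L^{a_0}, L^\infty]_{\alpha_0, q_0}$ while $L^\infty$ sits at the right endpoint of the couple. The reiteration step now requires the version of the stability theorem in which one of the intermediate spaces coincides with an endpoint of the base couple, which amounts to checking that $L^\infty$ is a space of class $1$ relative to $(L^{a_0}, L^\infty)$. This is the delicate point, and I expect it to be the main obstacle, since the basic form of reiteration is stated for parameters strictly inside $(0,1)$. I would resolve it either by invoking the endpoint formulation of the reiteration theorem, or, to keep the argument self-contained, by computing $K(t, f; L^{p_0, q_0}, L^\infty)$ directly from Holmstedt's formula together with the rearrangement description of $L^{p_0,q_0}$, and then reading off that $[L^{p_0, q_0}, L^\infty]_{\theta, q}$ carries the norm of $L^{p, q}$ with $1/p = (1-\theta)/p_0$. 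The remaining degenerate case $p_0 = p_1$, if it is to be covered, does not fit the reiteration-at-distinct-parameters scheme and would be treated separately by a direct $K$-functional computation at the single level $\alpha_0 = \alpha_1$.
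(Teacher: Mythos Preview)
The paper does not give its own proof of this lemma: it is stated in Section~2 as a known interpolation result and referenced to \cite{BL, Tartar, Tribel}, with no accompanying argument. So there is no in-paper proof to compare against.

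Your proposal is mathematically sound and is in fact the standard derivation one finds in the cited references: realize each Lorentz space $L^{p_i,q_i}$ as a real interpolation space of a fixed Lebesgue couple $(L^{a_0}, L^{a_1})$, then apply the Lions--Peetre reiteration theorem. Your bookkeeping on the exponents is correct, and you have correctly flagged the only genuinely delicate point, namely the endpoint case where $L^{p_1,q_1}$ is replaced by $L^\infty$. Either of your two suggested resolutions (the endpoint form of reiteration, or a direct $K$-functional computation using Holmstedt's formula) is adequate. Your remark on the degenerate case $p_0 = p_1$ is also apt: the lemma as stated does not exclude it, reiteration does not apply there, and the identity $[L^{p_0,q_0}, L^{p_0,q_1}]_{\theta,q} = L^{p_0,q}$ must be handled separately.
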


The above interpolation lemmas clearly generalize to vector-valued
Lorentz spaces. For example,  
\begin{lemma}\label{lemma-vector-Lorentz-interpolation} 
  Let $0<\theta <1$, $1<p_0, p_1 < \infty$, $1\leq q_0, q_1, q\leq
  \infty$, and $\frac{1}{p} = \frac{1-\theta}{p_0} +
  \frac{\theta}{p_1}$. Then 
  \be \label{vector-Lorentz-interpolation}
     [L^{p_0, q_0}(0, T; L^2), L^{p_1, q_1}(0, T; L^2)]_{\theta, q} =
     L^{p, q}(0, T; L^2) ~. 
  \ee
  The statement \eqref{vector-Lorentz-interpolation} is also valid
  after replacing $L^{p_1, q_1}(0, T; L^2)$ by $L^\infty(0, T; L^2)$.  
\end{lemma}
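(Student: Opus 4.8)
The plan is to reduce the vector-valued statement to the scalar interpolation result already recorded in Lemma~\ref{lemma-Lorentz-interpolation}, treating $E := L^2(\Omega)$ as an abstract Banach space and running the argument through the $K$-method of real interpolation. The essential point is that a vector-valued Lorentz space is itself a real-interpolation space of a couple of vector-valued \emph{Lebesgue} spaces; once this is in hand, the lemma follows from the reiteration (stability) theorem for real interpolation.

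Concretely, I would fix exponents $a_0, a_1$ with $1 < a_0 < \min\{p_0, p_1\}$ and $\max\{p_0,p_1\} < a_1 \leq \infty$, and choose $\theta_0, \theta_1 \in (0,1)$ so that $\frac{1}{p_i} = \frac{1-\theta_i}{a_0} + \frac{\theta_i}{a_1}$ for $i = 0,1$. First I would establish the vector-valued analogue of Lemma~\ref{lemma-Lorentz-interpolation}, namely
\be \nonumber
  L^{p_i, q_i}(0,T; L^2) = [L^{a_0}(0,T; L^2), L^{a_1}(0,T; L^2)]_{\theta_i, q_i}, \qquad i = 0, 1 .
\ee
Since $p_0 \neq p_1$ forces $\theta_0 \neq \theta_1$, the reiteration theorem (see \cite{BL}) then yields
\be \nonumber
  [L^{p_0, q_0}(0,T; L^2), L^{p_1, q_1}(0,T; L^2)]_{\theta, q} = [L^{a_0}(0,T; L^2), L^{a_1}(0,T; L^2)]_{\eta, q}
\ee
with $\eta = (1-\theta)\theta_0 + \theta\theta_1$, and a direct computation gives $\frac{1}{p} = \frac{1-\eta}{a_0} + \frac{\eta}{a_1} = \frac{1-\theta}{p_0} + \frac{\theta}{p_1}$. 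Feeding $\eta$ back into the first displayed identity identifies the right-hand side with $L^{p,q}(0,T; L^2)$, which is the claim.

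The step carrying the real content, and which I expect to be the main obstacle, is the vector-valued Lebesgue-to-Lorentz formula, i.e.\ the first displayed equality. Its proof rests on the equivalence of $K$-functionals
\be \nonumber
  K(t, f; L^{a_0}(0,T; L^2), L^{a_1}(0,T; L^2)) \approx K(t, \|f(\cdot)\|_{L^2}; L^{a_0}(0,T), L^{a_1}(0,T)),
\ee
uniformly in $t$. The inequality ``$\geq$'' is immediate from the triangle inequality in $L^2$, while ``$\lesssim$'' follows by the standard truncation: given a near-optimal scalar splitting of $\|f(\cdot)\|_{L^2}$ at a level $\lambda = \lambda(t)$, one sets $f_0 = f\,\mathbf{1}_{\{\|f(\cdot)\|_{L^2} > \lambda\}}$ and $f_1 = f - f_0$, which lifts the scalar decomposition to a vector-valued one with comparable norms. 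With this equivalence the entire scalar computation behind Lemma~\ref{lemma-Lorentz-interpolation} applies verbatim to the scalar function $s \mapsto \|f(s)\|_{L^2}$, delivering the vector-valued formula.

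Finally, the endpoint version in which $L^{p_1, q_1}(0,T; L^2)$ is replaced by $L^\infty(0,T; L^2)$ corresponds to taking $a_1 = \infty$ and $\theta_1 = 1$ in the reiteration step; the boundary case $[[A_0, A_1]_{\theta_0, q_0}, A_1]_{\theta, q} = [A_0, A_1]_{\eta, q}$ with $\eta = (1-\theta)\theta_0 + \theta$ is likewise available in \cite{BL} and produces $L^{p,q}(0,T; L^2)$ by the same parameter computation. The degenerate case $p_0 = p_1$ (forcing $p = p_0 = p_1$), where the reiteration exponents coincide, would be handled directly from the Lorentz-space structure rather than by reiteration, the issue there being only the second index.
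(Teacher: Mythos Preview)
The paper does not actually prove this lemma: it is stated as a direct consequence of the scalar result in Lemma~\ref{lemma-Lorentz-interpolation}, with the remark that ``the above interpolation lemmas clearly generalize to vector-valued Lorentz spaces,'' and the reader is referred to \cite{BL, Tartar, Tribel}. So there is no proof in the paper to compare against.

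Your outline is correct and supplies the mechanism behind that remark. The key observation --- that the $K$-functional for the vector-valued couple $(L^{a_0}(0,T;L^2), L^{a_1}(0,T;L^2))$ coincides, up to constants, with the scalar $K$-functional of $s\mapsto\|f(s)\|_{L^2}$ --- is exactly what makes the passage from Lemma~\ref{lemma-Lorentz-interpolation} to the vector-valued statement automatic, and your truncation argument for the upper bound is the standard one. Once that equivalence is in place, one does not even need the full reiteration machinery: since $\|f\|_{L^{p,q}(0,T;L^2)}=\big\|\,\|f(\cdot)\|_{L^2}\,\big\|_{L^{p,q}(0,T)}$ by definition, the scalar identity \eqref{Lorentz-interpolation} transfers directly. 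Your reiteration route is also fine and handles the endpoint $L^\infty(0,T;L^2)$ cleanly via the boundary case of the reiteration theorem. The only application in the paper is $[L^{2}(0,T;L^2),L^\infty(0,T;L^2)]_{1/2,4}=L^4(0,T;L^2)$, so the degenerate case $p_0=p_1$ you flag is never invoked.
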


Bounds for linear operators on interpolation spaces are the concern of
the next lemma. 
\begin{lemma}\label{lemma-operator-interpolation}
  Suppose that $E_0, E_1, F_0, F_1$ are Banach spaces and 
  $0< \theta <1$, $1\leq q \leq \infty$, and that $A$ is a linear
  operator from $E_0 +E_1$ into $F_0 + F_1$. If $A$ maps $E_0$ into
  $F_0$, with $\|A w \|_{F_0} \leq L_0 \|w\|_{E_0}$ for all $w\in
  E_0$, and maps $E_1$ into $F_1$, with $\|A w\|_{F_1} \leq L_1
  \|w\|_{E_1}$ for all $w \in E_1$. Then $A$ is a continous linear
  operator from $[E_0, E_1]_{\theta, q} $ into $[F_0, F_1]_{\theta,q}$ 
  and one has  
  \be\nonumber
    \|Aw\|_{[F_0, F_1]_{\theta, q}} \leq L_0^{1-\theta} L_1^{\theta}
    \|w\|_{[E_0, E_1]_{\theta, q}}~. 
  \ee
\end{lemma}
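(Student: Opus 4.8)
The plan is to argue directly from the K-method definition of the real interpolation spaces. Recall that for a compatible couple $(E_0, E_1)$ and $w \in E_0 + E_1$, the Peetre K-functional is
\be \nonumber
  K(t, w; E_0, E_1) = \inf_{w = w_0 + w_1} \big( \|w_0\|_{E_0} + t\|w_1\|_{E_1} \big), \qquad t > 0,
\ee
and that $\|w\|_{[E_0, E_1]_{\theta, q}} = \big( \int_0^\infty (t^{-\theta} K(t, w; E_0, E_1))^q \, \frac{dt}{t} \big)^{1/q}$ for $1 \leq q < \infty$, with the integral replaced by $\sup_{t>0} t^{-\theta} K(t, w; E_0, E_1)$ when $q = \infty$. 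Since $[E_0, E_1]_{\theta, q} \subseteq E_0 + E_1$ and $A$ is assumed to be defined on $E_0 + E_1$, the element $Aw$ makes sense for every $w$ in the interpolation space, so the statement is well posed.

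The central step, and the only place where the hypotheses on $A$ enter, is a scaling comparison between the K-functional of $Aw$ and that of $w$. Given any admissible decomposition $w = w_0 + w_1$ with $w_0 \in E_0$ and $w_1 \in E_1$, linearity furnishes the decomposition $Aw = Aw_0 + Aw_1$ with $Aw_0 \in F_0$ and $Aw_1 \in F_1$; the assumed operator bounds then give $\|Aw_0\|_{F_0} + t\|Aw_1\|_{F_1} \leq L_0\|w_0\|_{E_0} + tL_1\|w_1\|_{E_1} = L_0\big( \|w_0\|_{E_0} + \tfrac{L_1}{L_0} t\, \|w_1\|_{E_1} \big)$. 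Taking the infimum over all such decompositions of $w$ yields
\be \nonumber
  K(t, Aw; F_0, F_1) \leq L_0 \, K\!\left( \tfrac{L_1}{L_0}\, t, \ w; E_0, E_1 \right).
\ee
This single inequality encodes the entire content of the lemma; everything that remains is a change of variables in the defining integral.

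To finish I substitute the scaling estimate into the $[F_0, F_1]_{\theta, q}$ norm and set $s = (L_1/L_0)\, t$, under which the measure $\frac{dt}{t} = \frac{ds}{s}$ is dilation-invariant while the weight transforms as $t^{-\theta} = (L_1/L_0)^{\theta} s^{-\theta}$. The prefactor $L_0^q$ together with the weight factor $(L_1/L_0)^{\theta q}$ produced by the dilation combine to $L_0^{(1-\theta)q} L_1^{\theta q}$, and the remaining integral is exactly $\|w\|_{[E_0,E_1]_{\theta,q}}^q$; taking the $q$-th root gives the claimed constant $L_0^{1-\theta} L_1^{\theta}$. The case $q = \infty$ follows from the same substitution applied to the supremum rather than the integral. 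I do not expect any genuine obstacle here: the argument is the classical abstract interpolation theorem, and the only point requiring care is the bookkeeping of the dilation $t \mapsto (L_1/L_0)t$, since it is this factor — and the direction in which it is applied — that produces the geometric mean $L_0^{1-\theta}L_1^{\theta}$ rather than a different power of $L_0$ and $L_1$.
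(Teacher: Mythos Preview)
Your argument is correct: the K-functional comparison $K(t, Aw; F_0, F_1) \leq L_0\,K\!\big(\tfrac{L_1}{L_0}t, w; E_0, E_1\big)$ followed by the dilation $s = (L_1/L_0)t$ is exactly the standard K-method proof, and your bookkeeping of the exponents is right. The paper itself does not give a proof of this lemma at all --- it is stated in Section~2 as a known result with references to \cite{BL, Tartar, Tribel} --- so there is nothing to compare against beyond noting that your derivation is the classical one found in those sources.
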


In this paper, the constants $C, C_1, C_2, C_3, C_4, C_5, \cdots$ may
depend upon $\mu, \bar{\rho}, \Omega$ and as conventional in analysis,
they may change from line to line. 


\section{Proof of Theorem \ref{main-result} for $\Omega= \mathbb{R}^3$}
In this section, we focus on the case $\Omega=\mathbb{R}^3$.

\subsection{A Priori Estimates}
This subsection establishes several {\it a priori} estimates for
strong solutions to the Cauchy problem, which will play a key role in
extending local strong solutions to global ones.   

 Given a strong solution $(\rho, u , P)$ on $\mathbb{R}^3 \times [0, T]$, define 
\be \la{A}
   A(t) = \|\nabla u\|_{L^{4}(0, t;\  L^2)} ~,\ \ \ \ \ 0\leq t \leq T~.
\ee

\begin{theorem}\label{keypro}
Under the assumptions of Theorem \ref{thm-Choe-Kim}, there exists a 
positive constant $\varepsilon$ depending only on $\mu$ and $\bar{\rho}$, 
such that if $\|u_0\|_{\dot{H}^{\frac12}}  \leq \varepsilon$, 
and $(\rho, u, P)$ is a strong solution to \eqref{ns}-\eqref{initial}, 
satisfying  
\be\label{assumption} 
   A(T)  \leq 2,
\ee
then it in fact holds that
\[
  A(T)  \leq 1.
\]
\end{theorem}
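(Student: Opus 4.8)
The plan is to run a bootstrap argument anchored on the scale-invariant quantity $A(t) = \|\nabla u\|_{L^4(0,t;L^2)}$, using only energy-type estimates. First I would recall the basic energy identity: multiplying the momentum equation by $u$ and integrating, using $\div u = 0$ and the continuity equation, gives
\be\nonumber
  \frac12 \frac{d}{dt}\int \rho |u|^2\,dx + \mu \int |\nabla u|^2\,dx = 0~,
\ee
so that $\sup_t \|\sqrt\rho u\|_{L^2}^2 + 2\mu\|\nabla u\|_{L^2(0,T;L^2)}^2 = \|\sqrt{\rho_0}u_0\|_{L^2}^2$. Also, since the continuity equation is a transport equation for $\rho$ along the divergence-free field $u$, we have $\|\rho(t)\|_{L^\infty} = \|\rho_0\|_{L^\infty} = \bar\rho$ and $\|\rho(t)\|_{L^{3/2}} = \|\rho_0\|_{L^{3/2}}$ for all $t$. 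I would also need to control $\|\sqrt{\rho_0}u_0\|_{L^2}$ by the critical norm: by Hölder and Sobolev, $\int\rho_0|u_0|^2 \le \|\rho_0\|_{L^{3/2}}\|u_0\|_{L^6}^2 \lesssim \|\rho_0\|_{L^{3/2}}\|\nabla u_0\|_{L^2}^2$; but the smallness hypothesis is on $\|u_0\|_{\dot H^{1/2}}$, not $\|\nabla u_0\|_{L^2}$, so at this stage I can only assert that the energy is finite, not small — the smallness must be propagated differently.

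The heart of the matter is the higher-order estimate. I would differentiate the momentum equation in $t$, or equivalently test it against $\dot u := \partial_t u + u\cdot\nabla u$ (the material derivative), to obtain an estimate of the form
\be\nonumber
  \frac{d}{dt}\Big(\frac{\mu}{2}\int|\nabla u|^2\,dx\Big) + \int \rho |\dot u|^2\,dx
  \le C\!\int \rho |u|^2 |\nabla u|^2\,dx + (\text{lower order})~,
\ee
where the pressure is handled by the Helmholtz/Leray projection and the incompressibility constraint. The troublesome term $\int \rho|u|^2|\nabla u|^2$ I would bound using $\|\rho\|_{L^\infty}$, Hölder, Gagliardo–Nirenberg and the $\dot H^{1/2}$–$L^3$ embedding: schematically $\int\rho|u|^2|\nabla u|^2 \le \bar\rho\|u\|_{L^6}^2\|\nabla u\|_{L^3}^2 \lesssim \bar\rho\|\nabla u\|_{L^2}^{?}\|\nabla u\|_{L^2}^{?}\cdots$, interpolating $\|\nabla u\|_{L^3}$ between $\|\nabla u\|_{L^2}$ and the second-order term controlled by elliptic regularity for the Stokes operator, $\|\nabla^2 u\|_{L^2} \lesssim \|\rho\dot u\|_{L^2} \le \bar\rho^{1/2}\|\sqrt\rho\dot u\|_{L^2}$. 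The point is that these estimates are all scale-invariant, so $A(t)$ appears naturally. Integrating in time from $0$ to $T$ and using the assumption $A(T)\le 2$ to absorb the nonlinear terms, I expect to get
\be\nonumber
  A(T)^4 = \|\nabla u\|_{L^4(0,T;L^2)}^4 \le C\,\|u_0\|_{\dot H^{1/2}}^{?}\,\big(1 + A(T)\big)^{?} \le C\,\varepsilon^{?}\,(1+2)^{?}~,
\ee
and choosing $\varepsilon$ small forces $A(T)^4 \le 1$, hence $A(T)\le 1$. A subtlety is that the left side needs $L^4_t L^2_x$ of $\nabla u$, which I would extract by interpolating $\|\nabla u\|_{L^2_x}$ in time between $L^\infty_t$ (from the top-order energy, once I know $\|\nabla u(t)\|_{L^2}$ stays bounded) and $L^2_t$ (from the basic energy), or more precisely by proving a differential inequality for $y(t) = \|\nabla u(t)\|_{L^2}^2$ of the form $y' + c\,\|\nabla^2 u\|_{L^2}^2 \le \Phi(t) y$ with $\int\Phi \lesssim \varepsilon$-controlled, which after a Gronwall-type argument yields both $\sup_t y(t) \lesssim \|u_0\|_{\dot H^{1/2}}$-small and $\int_0^T y(t)^2\,dt$ small. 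To initialize, I note $\|\nabla u(0)\|_{L^2} = \|\nabla u_0\|_{L^2}$ which is merely finite, so I would first use the $L^2$ energy to gain smallness of $\int_0^{t_0}\|\nabla u\|_{L^2}^2$ on a short interval, or better, directly exploit that $\|\nabla u\|_{L^2(0,t;L^2)}^2 \le \frac{1}{2\mu}\|\sqrt{\rho_0}u_0\|_{L^2}^2$ is the energy (not small) but that the weighted quantity controlling the nonlinearity involves $\|u\|_{L^3}$, which is small because $\|u(t)\|_{\dot H^{1/2}}$ propagates — this last propagation being itself a consequence of the $L^4_tL^2_x$ bound via an interpolation/duality argument as in Lemmas 2.2–2.4.

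The main obstacle I anticipate is the absence of a positive lower bound on $\rho$: one cannot divide by $\rho$ to solve for $\partial_t u$, so every estimate must be phrased in terms of $\sqrt\rho\,\dot u$ and the Stokes elliptic regularity $\mu\|\nabla^2 u\|_{L^2} + \|\nabla P\|_{L^2} \le C\|\rho\dot u\|_{L^2}$, which holds because $-\mu\Delta u + \nabla P = -\rho\dot u$ with $\div u = 0$. Keeping the constant in this elliptic estimate independent of the vacuum set (it depends only on $\Omega$ and $\mu$) is what makes the bootstrap close; this is exactly where the compatibility condition of Theorem~\ref{thm-Choe-Kim} enters, guaranteeing $\sqrt\rho\,\dot u \in L^\infty_tL^2_x$ at $t=0$ so the differential inequality has a finite starting value. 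A secondary technical point is verifying that all the interpolation inequalities used (Gagliardo–Nirenberg in $\mathbb{R}^3$, and the time-interpolation via Lemmas~\ref{lemma-vector-Lorentz-interpolation} and \ref{lemma-operator-interpolation}) combine to give exponents that make $A(T)$ appear to a power strictly less than $4$ on the right-hand side, so that the inequality $A(T)^4 \le C\varepsilon^a(1+A(T))^b$ with $b<4$ can be closed for small $\varepsilon$.
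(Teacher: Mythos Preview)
Your outline assembles the correct ingredients—the basic energy identity, the higher-order estimate for $\|\nabla u\|_{L^\infty_t L^2_x}$ obtained by testing against $\partial_t u$ (or $\dot u$), Stokes elliptic regularity to handle $\|\nabla^2 u\|_{L^2}$, and the assumption $A(T)\le 2$ fed into Gronwall—but there is a genuine gap at the step where you pass from these endpoint bounds to control of $A(T)$ by $\|u_0\|_{\dot H^{1/2}}$. As you yourself observe, the basic energy gives $\|\nabla u\|_{L^2_tL^2_x}\lesssim \|\sqrt{\rho_0}u_0\|_{L^2}$ and the higher-order estimate gives $\|\nabla u\|_{L^\infty_tL^2_x}\lesssim \|\nabla u_0\|_{L^2}$, neither of which is small. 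Simple H\"older-in-time interpolation between them would yield $A(T)\lesssim \|u_0\|_{L^2}^{1/2}\|\nabla u_0\|_{L^2}^{1/2}$, which dominates (not is dominated by) $\|u_0\|_{\dot H^{1/2}}$; so this route does not close. Your fallback—``$\|u(t)\|_{\dot H^{1/2}}$ propagates, this being itself a consequence of the $L^4_tL^2_x$ bound''—is circular, and your invocation of Lemma~\ref{lemma-operator-interpolation} is not justified because that lemma applies only to \emph{linear} operators, whereas the solution map $u_0\mapsto u$ is nonlinear.

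The missing idea, which is the heart of the paper's argument, is a linearization trick: freeze the coefficients $(\rho,u)$ and regard the momentum equation as the \emph{linear} problem
\[
  \rho\,\partial_t w - \mu\Delta w + (\rho u\cdot\nabla)w + \nabla\tilde P = 0,\qquad \div w=0,\qquad w(0)=w_0.
\]
For this system the two endpoint estimates read $\|\nabla w\|_{L^2_tL^2_x}\le C\|w_0\|_{L^2}$ and, under $A(T)\le 2$, $\|\nabla w\|_{L^\infty_tL^2_x}\le C\|\nabla w_0\|_{L^2}$, with constants depending only on $\mu,\bar\rho$. Now $w_0\mapsto \nabla w$ is genuinely linear, so Lemma~\ref{lemma-operator-interpolation} together with Lemmas~\ref{lemma-Besov-interpolation}--\ref{lemma-vector-Lorentz-interpolation} yields $\|\nabla w\|_{L^4_tL^2_x}\le C_1\|w_0\|_{\dot H^{1/2}}$. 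Since $u$ itself solves this linear problem with $w_0=u_0$, one concludes $A(T)\le C_1\|u_0\|_{\dot H^{1/2}}\le C_1\varepsilon$, and choosing $\varepsilon\le 1/C_1$ gives $A(T)\le 1$. Without this linearization step, the real-interpolation machinery cannot be invoked and the bootstrap does not close.
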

The remainder of this subsection consists in proving this key result.

\begin{lemma}\label{lemma-3-1}
Let $(\rho, u, P)$ be a strong solution of \eqref{ns}-\eqref{initial}. 
Then for all $0\leq t \leq T$, 
\be \label{Eqn:TransportProperty}
   \|\rho(t)\|_{L^\infty} = \|\rho_0\|_{L^\infty} = \bar{\rho} ~.
\ee
\end{lemma}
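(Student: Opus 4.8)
The plan is to use the transport (continuity) equation $\partial_t \rho + \mathrm{div}(\rho u) = 0$ together with the incompressibility constraint $\mathrm{div}\, u = 0$ to conclude that $\rho$ is constant along the flow of $u$. First I would observe that since $\mathrm{div}\, u = 0$, the continuity equation rewrites as the pure transport equation
\[
   \partial_t \rho + u \cdot \nabla \rho = 0 ~.
\]
Formally, if $X(t,x)$ denotes the flow map solving $\frac{d}{dt}X(t,x) = u(t, X(t,x))$ with $X(0,x) = x$, then $\rho(t, X(t,x)) = \rho_0(x)$, so $\rho(t, \cdot)$ is obtained from $\rho_0$ by a measure-preserving rearrangement (measure preserving because $\mathrm{div}\, u = 0$ forces the Jacobian of $X(t,\cdot)$ to be $1$). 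This immediately gives $\|\rho(t)\|_{L^\infty} = \|\rho_0\|_{L^\infty}$, and in fact equality of all $L^p$ norms and of the essential range.

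To make this rigorous at the regularity of a strong solution, I would avoid relying on a classical flow and instead argue at the level of the renormalized (DiPerna--Lions) transport theory, or more simply by an $L^p$ energy estimate: for the strong solutions in question, $u \in L^2(0,T; D^{2,2}) \cap L^\infty(0,T; D^{1,2})$ and $\rho \in L^\infty(0,T; L^{3/2}\cap H^2)$, which is more than enough regularity and integrability to justify multiplying the transport equation by $p \rho |\rho|^{p-2}$ and integrating by parts, using $\mathrm{div}\, u = 0$ to kill the resulting term $\int |\rho|^p \,\mathrm{div}\, u \, dx = 0$. This yields $\frac{d}{dt}\|\rho(t)\|_{L^p}^p = 0$ for every finite $p$ in a suitable range, hence $\|\rho(t)\|_{L^p} = \|\rho_0\|_{L^p}$, and letting $p \to \infty$ recovers $\|\rho(t)\|_{L^\infty} = \|\rho_0\|_{L^\infty} = \bar\rho$. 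One should also note $\rho(t) \geq 0$ is preserved (the maximum principle for the transport equation), so the statement is consistent with the possible presence of vacuum.

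The main technical obstacle is the justification of the integration by parts / the chain rule for $\rho \mapsto |\rho|^p$ at the stated low regularity, in particular near the vacuum set where $\rho$ may vanish and $\rho$ is only $H^2$ in space with no positive lower bound; this is precisely the point where one invokes the commutator estimates of DiPerna--Lions to show the transport equation is satisfied in the renormalized sense, so that $\beta(\rho)$ solves $\partial_t \beta(\rho) + u\cdot\nabla\beta(\rho) = 0$ for all suitable $\beta$. Since the existence theorem (Theorem~\ref{thm-Choe-Kim}) already produces solutions for which the continuity equation holds in a strong enough sense, this step is essentially standard, and I would expect the paper's proof to simply cite the transport structure and the divergence-free condition, perhaps with a one-line reference to the flow-map argument or to \cite{DL, Lions}.
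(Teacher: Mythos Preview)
Your proposal is correct and aligns with the paper's approach: the paper's proof is a single sentence observing that, since $\mathrm{div}\,u=0$, the mass equation is a pure transport equation, from which \eqref{Eqn:TransportProperty} follows. Your discussion of the flow map and the $L^p$ energy argument supplies the details behind that one-line justification, and your closing expectation about the paper's proof is exactly right.
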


\begin{proof}
The mass equation is in fact a transport equation, owing to the fact that
${\rm div} (u) =0$, from which \eqref{Eqn:TransportProperty} follows. 
\end{proof}


\begin{lemma}\label{lemma-3-3}Let $(\rho, u, P)$ be a strong solution
  of \eqref{ns}-\eqref{initial}. Suppose that
  \be \nonumber
A(T) \leq 2,
\ee
 then there exists some constant $C_1$ depending on $\mu, \bar{\rho}$, such that
  \be \label{3-3}
    \|\nabla u\|_{L^4(0, T; L^2 )} \leq C_1 \|u_0\|_{\dot{H}^{\frac12}} ~,
  \ee
and it holds that 
\be \la{3-3-add}
\sup_{t\in [0, T]}  t \|\nabla u (t)\|_{L^2}^2 \leq C \int \rho_0 |u_0|^2 \, dx.
\ee
Note that $C_1$ does not depend on $T$.
\end{lemma}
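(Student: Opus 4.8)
The plan is to run a short hierarchy of energy estimates for the velocity, using the \emph{a priori} bound \eqref{assumption} to absorb every nonlinear term, and then to upgrade the resulting $T$‑independent bounds to the scale‑invariant estimate \eqref{3-3} by interpolation. First I would record the basic energy identity: testing the momentum equation in \eqref{ns} with $u$, and using $\div u=0$ together with the mass equation, gives for all $0\le t\le T$
\[
   \int_{\O}\rho|u|^2\,dx + 2\mu\int_0^t\|\nabla u\|_{L^2}^2\,ds = \int_{\O}\rho_0|u_0|^2\,dx=:E_0 ,
\]
hence $\int_0^T\|\nabla u\|_{L^2}^2\,ds\le E_0/(2\mu)$. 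Next I would test the momentum equation with $\partial_t u$. Using $\div u=0$ (which kills the pressure term), Lemma~\ref{lemma-3-1}, the $H^2$ estimate for the Stokes system applied to $-\mu\lap u+\nabla P=-\rho\partial_t u-\rho u\cdot\nabla u$, and the three–dimensional inequalities $\|v\|_{L^6}\le C\|\nabla v\|_{L^2}$ and $\|\nabla u\|_{L^3}\le C\|\nabla u\|_{L^2}^{1/2}\|\nabla^2u\|_{L^2}^{1/2}$, the cross term $\int\rho\,(u\cdot\nabla u)\cdot\partial_t u$ is bounded by $\tfrac12\|\sqrt\rho\,\partial_t u\|_{L^2}^2+C\bar\rho^{\,3}\|\nabla u\|_{L^2}^6$, which yields
\[
   \frac{\mu}{2}\frac{d}{dt}\|\nabla u\|_{L^2}^2+\frac14\|\sqrt\rho\,\partial_t u\|_{L^2}^2\le C\bar\rho^{\,3}\|\nabla u\|_{L^2}^{6}=C\bar\rho^{\,3}\|\nabla u\|_{L^2}^{4}\,\|\nabla u\|_{L^2}^{2}.
\]
Since \eqref{assumption} means $\int_0^T\|\nabla u\|_{L^2}^4\,ds\le 16$, Gr\"onwall's inequality gives $\|\nabla u(t)\|_{L^2}^2\le e^{K}\|\nabla u_0\|_{L^2}^2$ with $K=K(\mu,\bar\rho)$, and then $\int_0^T\big(\|\sqrt\rho\,\partial_t u\|_{L^2}^2+\|\nabla^2u\|_{L^2}^2\big)\,ds\le C(\mu,\bar\rho)\|\nabla u_0\|_{L^2}^2$.

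For the decay bound \eqref{3-3-add} I would multiply the last differential inequality by $t$, discard the good term, and integrate from $0$: with $W(t)=t\|\nabla u(t)\|_{L^2}^2$ one has $W'(t)\le\|\nabla u\|_{L^2}^2+\tfrac{2C\bar\rho^3}{\mu}\|\nabla u\|_{L^2}^4\,W(t)$ and $W(0)=0$, so Gr\"onwall together with $\int_0^T\|\nabla u\|_{L^2}^2\,ds\le E_0/(2\mu)$ and $\int_0^T\|\nabla u\|_{L^2}^4\,ds\le16$ yields $t\|\nabla u(t)\|_{L^2}^2\le C(\mu,\bar\rho)\,E_0$, which is exactly \eqref{3-3-add}.

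The scale‑invariant estimate \eqref{3-3} is the crux, because neither $\|\nabla u_0\|_{L^2}$ nor $E_0$ has the right scaling to appear on its right‑hand side, so a genuinely critical estimate is needed. For this I would carry out a weighted energy estimate at the $\dot{H}^{\frac12}$ level, formally obtained by testing the momentum equation with $(-\lap)^{1/2}u$: the viscous term produces the good quantity $\mu\|u\|_{\dot{H}^{\frac32}}^2$, the pressure term drops since $(-\lap)^{1/2}$ commutes with $\div$, and the convective term is controlled by $\|\sqrt\rho\,u\cdot\nabla u\|_{L^2}\le C\bar\rho^{1/2}\|\nabla u\|_{L^2}\|\nabla u\|_{L^3}$ together with the Sobolev embedding $\|\nabla u\|_{L^3}\le C\|u\|_{\dot{H}^{\frac32}}$ and the interpolation $\|\nabla u\|_{L^2}^2=\|u\|_{\dot{H}^1}^2\le\|u\|_{\dot{H}^{\frac12}}\|u\|_{\dot{H}^{\frac32}}$ of Lemma~\ref{lemma-Besov-interpolation}; after two absorptions into $\mu\|u\|_{\dot{H}^{\frac32}}^2$ this produces an inequality of the shape
\[
   \frac{d}{dt}Y(t)+\frac{\mu}{2}\|u\|_{\dot{H}^{\frac32}}^2\le C(\mu,\bar\rho)\,\|\nabla u\|_{L^2}^4\,Y(t),\qquad Y(t)\sim\|u(t)\|_{\dot{H}^{\frac12}}^2,\quad Y(0)\le C\|u_0\|_{\dot{H}^{\frac12}}^2 .
\]
Gr\"onwall and \eqref{assumption} then give $\sup_{[0,T]}\|u\|_{\dot{H}^{\frac12}}^2+\int_0^T\|u\|_{\dot{H}^{\frac32}}^2\,ds\le C(\mu,\bar\rho)\|u_0\|_{\dot{H}^{\frac12}}^2$, and interpolating once more, $\|\nabla u\|_{L^2}^4=\|u\|_{\dot{H}^1}^4\le\|u\|_{\dot{H}^{\frac12}}^2\|u\|_{\dot{H}^{\frac32}}^2$, integration in time yields $\|\nabla u\|_{L^4(0,T;L^2)}^4\le\big(\sup_{[0,T]}\|u\|_{\dot{H}^{\frac12}}^2\big)\int_0^T\|u\|_{\dot{H}^{\frac32}}^2\,ds\le C_1^4\|u_0\|_{\dot{H}^{\frac12}}^4$, with $C_1$ depending only on $\mu$ and $\bar\rho$.

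The step I expect to be the main obstacle is this last $\dot{H}^{\frac12}$ estimate: the term $\int\rho\,\partial_t u\cdot(-\lap)^{1/2}u$ has to be turned into $\tfrac12\tfrac{d}{dt}$ of a quantity comparable to $\|u\|_{\dot{H}^{\frac12}}^2$, but $\rho$ — which is only bounded, and \emph{not} bounded below when vacuum is present — does not commute with the fractional derivative, so one must control the resulting commutator and the $\partial_t\rho$ contribution without ever invoking the uncontrolled quantity $\|\partial_t u\|_{L^2}$, and must ensure that the final bound does not silently pick up a factor of $\|\nabla u_0\|_{L^2}$ or $E_0$, which would destroy the scaling. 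Everything else is routine, if careful, bookkeeping with Sobolev embeddings and Gr\"onwall's inequality.
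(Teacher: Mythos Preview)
Your treatment of the decay estimate \eqref{3-3-add} is essentially the paper's: multiply the differential inequality for $\|\nabla u\|_{L^2}^2$ by $t$, use $A(T)\le 2$ in Gr\"onwall, and bound $\int_0^T\|\nabla u\|_{L^2}^2\,ds$ by the basic energy $E_0$.

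For the critical estimate \eqref{3-3}, however, the paper takes a different and much cleaner route that sidesteps precisely the obstacle you flag. Rather than attempting any fractional energy estimate on $u$, the paper freezes $(\rho,u)$ as coefficients and considers the \emph{linear} initial-value problem
\[
\rho\,\partial_t w-\mu\Delta w+(\rho u\cdot\nabla)w+\nabla\tilde P=0,\qquad \div w=0,\qquad w(0)=w_0.
\]
The two integer-order estimates you have already written down --- the basic energy identity and the $\partial_t$-test, both run for $w$ in place of $u$ --- show that the \emph{linear} map $w_0\mapsto\nabla w$ is bounded from $L^2$ to $L^2(0,T;L^2)$ and from $\dot H^1$ to $L^\infty(0,T;L^2)$, with operator norms depending only on $\mu,\bar\rho$ once $A(T)\le2$. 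Abstract interpolation of linear operators (Lemma~\ref{lemma-operator-interpolation}) then yields boundedness from $[L^2,\dot H^1]_{1/2,4}\supseteq\dot H^{1/2}$ into $[L^2(0,T;L^2),L^\infty(0,T;L^2)]_{1/2,4}=L^4(0,T;L^2)$. Specializing to $w=u$, $w_0=u_0$ gives \eqref{3-3} with no fractional calculus at all.

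Your direct $\dot H^{1/2}$ estimate, by contrast, does not obviously close, and the obstruction you correctly isolate is real rather than cosmetic in the vacuum setting. Writing $\int\rho\,\partial_t u\cdot(-\Delta)^{1/2}u$ as $\tfrac12\tfrac{d}{dt}\int\rho\,|(-\Delta)^{1/4}u|^2$ plus a commutator leaves you with (i) a functional $\int\rho\,|(-\Delta)^{1/4}u|^2$ that is \emph{not} comparable to $\|u\|_{\dot H^{1/2}}^2$ from below where $\rho$ vanishes, so even a favorable Gr\"onwall would not return $\sup_t\|u\|_{\dot H^{1/2}}^2$; and (ii) a commutator $[(-\Delta)^{1/4},\rho]\,\partial_t u$ for which Kato--Ponce type bounds require control of $\partial_t u$ itself in some Lebesgue space, whereas only $\sqrt\rho\,\partial_t u$ is available. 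Neither issue has an evident fix at the regularity in play, so as written the proposal has a genuine gap for \eqref{3-3}. The paper's linearization-plus-interpolation device is exactly what makes the vacuum case go through.
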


\begin{proof}First, let's consider the following linear Cauchy problem
for $(w, \tilde{P})$,  
\be \la{Cauchy-system}
    \left\{ \ba
  & \rho \partial_t w - \mu \Delta w + (\rho u\cdot \nabla )w + \nabla
  \tilde{P} = 0 ~,  \\
  & {\rm div}~w=0 ~,  \\
  & w(x, 0) = w_0(x) ~.
\ea \right.
\ee
Suppose that $w_0$ satisfies the conditions assumed for $u_0$ as
prescribed in Theorem \ref{thm-Choe-Kim}, then the existence and
uniqueness of strong solution to \eqref{Cauchy-system} has been proved
in \cite{Choe-Kim}. Straightforward energy estimates tell that
\be\la{energy-3}
  \frac12 \int  \rho |w(T)|^2 \, dx +  \mu \int_0^T  \|\nabla w\|_{L^2}^2 \,
  dt  \leq  \frac12 \int \rho_0 |w_0|^2 \, dx \leq C(\bar{\rho})
  \|w_0\|_{L^2}^2 ~. 
\ee

Multiplying $\eqref{Cauchy-system}_1$ by $\partial_t w$ and integrating over
$\mathbb{R}^3$, one gets by Sobolev embedding that 
\be  \la{3-3-1} \ba
  \int \rho |\partial_t w|^2 \, dx & 
    + \frac{\mu}{2} \frac{d}{dt} \int |\nabla w|^2 \, dx 
    = - \int (\rho u\cdot \nabla ) w \cdot w_t \, dx \\
    \leq & C(\bar{\rho}) \|u\|_{L^6}  \|\nabla w\|_{L^3} \|\sqrt{\rho} \partial_t w\|_{L^2} \\ 
    \leq &  \frac14 \int \rho |\partial_t w|^2 \,  dx + C \|\nabla u\|_{L^2}^2 \|\nabla w\|_{L^2}  \|\nabla w\|_{L^6} ~. 
\ea \ee

Notice that the momentum equation can be written as
\be \nonumber
-\mu \Delta w + \nabla \tilde{P}= -\rho \partial_t w - (\rho u\cdot \nabla) w,
\ee
where the left handside is viewed as the Helmholtz-Weyl decomposition of the right one. 
From this equation,
\be \nonumber
\mu \| \Delta w \|_{L^2} \leq \|\rho \partial_t w\|_{L^2} + \|(\rho u\cdot \nabla ) w \|_{L^2}.
\ee
It follows from Calder\'on-Zygmund inequality and the Sobolev embedding theorem that
\be \la{3-3-2} \ba
& \|\nabla w\|_{L^6} \leq C \|\nabla^2 w\|_{L^2}    \\
\leq & C \|\rho \partial_t w\|_{L^2} + C \|(\rho u\cdot \nabla ) w\|_{L^2} \\
\leq & C \|\rho \partial_t w\|_{L^2} + C \|\nabla u\|_{L^2} \|\nabla w\|_{L^2}^{\frac12} \|\nabla w\|_{L^6}^{\frac12},
\ea \ee
which, together with Young's inequality, implies that
\be \la{3-3-3} 
\ba
\|\nabla w\|_{L^6} \leq C \|\nabla^2 w\|_{L^2}  \leq C \|\rho \partial_t w\|_{L^2} + C \|\nabla u\|_{L^2}^2 \|\nabla w\|_{L^2}.
\ea
\ee

Inserting \eqref{3-3-3} into \eqref{3-3-1}, we get 
\be \la{3-3-4}  
\frac12 \int \rho |\partial_t w|^2 \, dx 
    + \frac{\mu}{2} \frac{d}{dt} \int |\nabla w|^2 \, dx 
\leq C \|\nabla u\|_{L^2}^4 \|\nabla w\|_{L^2}^2.
\ee
By Gronwall's inequality and the assumption $A(T)\leq 2$,
\be \la{3-3-5}
\int_0^t \int \rho |\partial_t w(s)|^2 \, dx ds + \|\nabla w(t)\|_{L^2}^2 \leq C e^C \|\nabla w_0\|_{L^2}^2.
\ee

For fixed $(\rho, u)$, the map from $w_0$ to $\nabla w(t)$ is linear. Furthermore, by Lemmas \ref{lemma-Besov-interpolation} and \ref{lemma-vector-Lorentz-interpolation}, 
$L^4(0, T; L^2) =
\left[ L^{2}(0, T; L^2 ), \ L^\infty(0, T;  L^2)\right]_{\frac12, 4} $ and
$\dot{H}^{\frac12} \subseteq  \left[L^2, \dot{H}^1\right]_{\frac12, 4}  $, then one gets,
upon combining the estimate \eqref{energy-3} and \eqref{3-3-5}, 
and using Lemma \ref{lemma-operator-interpolation}, that 
\be \la{3-3-6}
   \|\nabla w\|_{L^4(0, T; L^2)} \leq C_1  \|w_0\|_{\dot{H}^{\frac12}} ~.
\ee
Notice that $C_1$ does not depend on $T$, since one can scale $[0, T]$
to $[0, 1]$.  Consequently, 
\be \nonumber
   \|\nabla u\|_{L^4(0, T; L^2)} \leq C_1 \|u_0\|_{\dot{H}^{\frac12}}.
\ee

Regarding the decay of $\|\nabla u\|_{L^2}$, upon multiplying \eqref{3-3-4} by $t$, one gets that
\be \nonumber
t\int \rho |\partial_t u|^2 \, dx +  \frac{d}{dt} \int t |\nabla u|^2 \, dx \leq \int |\nabla u|^2 dx 
+ Ct  \|\nabla u\|_{L^2}^4 \|\nabla u\|_{L^2}^2.
\ee
If $A(T)\leq 2$, then by Gronwall's inequality,
\be \la{3-3-7} \ba
 &\int_0^T t \|\sqrt{\rho}   \partial_t u \|_{L^2}^2\, dt  + \sup_{t\in [0, T]} t\|\nabla u(t)\|_{L^2}^2 \\
&\leq C \int_0^T \|\nabla u\|_{L^2}^2 \, dt \leq  C \int \rho_0 |u_0|^2 \, dx,
\ea
\ee
where the basic energy inequality for $u$ is utilized.

\end{proof}


\begin{proof}[Proof of Theorem~\ref{keypro}] 
The conclusion of Theorem \ref{keypro} will follow if we let 
$\varepsilon \leq 1/C_1$, which is given in 
Lemma~\ref{lemma-3-3}. 
\end{proof}

\begin{remark}According to the proof of Lemma \ref{lemma-3-3}, the $\dot{H}^{\frac12}$-norm of $u_0$ can 
be replaced by $\dot{B}^{\frac12}_{2, 4}$-norm, which will make the main result more refined.
\end{remark}

\vspace{4mm} From this point on, we will consider only the small data problem,
assuming that the initial data satisfies the condition 
$\|u_0\|_{\dot{H}^{\frac12} } \leq \varepsilon$, as in 
Theorem~\ref{keypro}.  
The notation $\bar{C}$ is used to denote a positive constant, which
may depend on $T$ and the initial data, and it may change from line to
line. The computation is standard, with respect to that in \cite{Kim},
but we sketch it here for completeness. From the proof of
Lemma \ref{lemma-3-3}, we know that   
\begin{equation}\la{3-3-8}
    \int_0^T \|\sqrt{\rho} \partial_t u\|_{L^2}^2\,  dt  
    + \mu \|\nabla u\|_{L^\infty(0, T; L^2)}^2 \leq C \|\nabla
    u_0\|_{L^2}^2 \leq \bar{C} ~.
\end{equation}


\begin{lemma}[{Estimates for $\|\sqrt{\rho} \partial_t u\|_{L^2}$ and
    $\|\nabla u\|_{H^1}$}] \label{lemma-3-6} 
  Under the assumption of Theorem~\ref{keypro}, we have 
\begin{equation}\label{3-6}
   \sup_{t\in [0, T]} \left[ \|\sqrt{\rho} \partial_t u\|_{L^2}^2 
   + \|\nabla u\|_{H^1}^2\right] 
   + \int_0^T \|\nabla \partial_t u\|_{L^2}^2 \, dt \leq \bar{C} ~.
\end{equation}
\end{lemma}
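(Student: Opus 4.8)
The plan is to differentiate the momentum equation in time and run an energy estimate on $\partial_t u$, then close the resulting bound using the smallness already extracted in Lemma~\ref{lemma-3-3}. Writing the momentum equation as $\rho\partial_t u-\mu\lap u+\rho u\cdot\na u+\na P=0$, I would apply $\partial_t$ to get
\be\nonumber
 \rho\partial_t^2 u-\mu\lap\partial_t u+\rho u\cdot\na\partial_t u+\na P_t
 = -\rho_t\partial_t u-\rho_t u\cdot\na u-\rho\partial_t u\cdot\na u~.
\ee
Multiply by $\partial_t u$, integrate over $\r^3$, and use $\rho_t=-\div(\rho u)$ together with $\div u=0$ and $\div\partial_t u=0$ to rewrite the $\rho_t$-terms as transport-type integrals. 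The standard identities give
\be\nonumber
 \frac12\frac{d}{dt}\int\rho|\partial_t u|^2\,dx+\mu\int|\na\partial_t u|^2\,dx
 = -2\int\rho u\cdot\na\partial_t u\cdot\partial_t u\,dx-\int\rho u\cdot\na(u\cdot\na u)\cdot\partial_t u\,dx-\int\rho\partial_t u\cdot\na u\cdot\partial_t u\,dx~,
\ee
and the task is to bound the right-hand side by $\ve(t)\|\na\partial_t u\|_{L^2}^2+(\text{integrable in }t)\cdot\|\sqrt\rho\partial_t u\|_{L^2}^2$ so that Gronwall applies.

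Next I would estimate each term on the right using H\"older, Sobolev embedding $H^1\hookrightarrow L^6$, Gagliardo--Nirenberg, and the elliptic regularity bound \eqref{3-3-3} already available for $u$, namely $\|\na u\|_{L^6}\le C\|\na^2 u\|_{L^2}\le C\|\rho\partial_t u\|_{L^2}+C\|\na u\|_{L^2}^2\|\na u\|_{L^2}$ (valid here because under $A(T)\le2$ the quantity $\|\na u\|_{L^2}$ is bounded, in fact decaying by \eqref{3-3-7}). A representative term is $\int\rho|u||\partial_t u||\na\partial_t u|\,dx\le C\bar\rho\|u\|_{L^6}\|\partial_t u\|_{L^3}\|\na\partial_t u\|_{L^2}\le C\|\na u\|_{L^2}\|\sqrt\rho\partial_t u\|_{L^2}^{1/2}\|\na\partial_t u\|_{L^2}^{3/2}$, and then Young's inequality turns this into $\frac\mu{8}\|\na\partial_t u\|_{L^2}^2+C\|\na u\|_{L^2}^4\|\sqrt\rho\partial_t u\|_{L^2}^2$; since $\|\na u\|_{L^2}^4\in L^1(0,T)$ by \eqref{3-3-8} (indeed $\|\na u\|_{L^\infty_t L^2_x}$ is bounded and $\|\na u\|_{L^4_tL^2_x}$ is small), Gronwall closes it. The cubic term $\int\rho u\cdot\na(u\cdot\na u)\cdot\partial_t u$ needs slightly more care: after integration by parts one finds contributions like $\int\rho|u||\na u|^2|\partial_t u|$ and $\int\rho|u|^2|\na^2u||\partial_t u|$, estimated via $\|\na u\|_{L^3}$, $\|\na^2 u\|_{L^2}$ and the elliptic bound, again absorbing a small multiple of $\|\na\partial_t u\|_{L^2}^2$. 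One must also control $\int_0^T\|\sqrt\rho\partial_t u\|_{L^2}^4\,dt$ type quantities, which follows by interpolating \eqref{3-3-8} against the sup-bound being established, so a short bootstrap/continuation argument is used to make this rigorous.

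To finish, integrating the Gronwall inequality in time from $0$ to $T$ gives $\sup_{[0,T]}\|\sqrt\rho\partial_t u\|_{L^2}^2+\int_0^T\|\na\partial_t u\|_{L^2}^2\,dt\le\bar C$, where the bound at $t=0$ is finite thanks to the compatibility condition $\mu\lap u_0-\na P_0=\sqrt{\rho_0}\,g_0$, which gives $\|\sqrt{\rho_0}\partial_t u(0)\|_{L^2}=\|g_0\|_{L^2}<\infty$. Finally, $\|\na u\|_{H^1}^2$ is recovered from the already-cited elliptic estimate \eqref{3-3-3}: $\|\na^2 u\|_{L^2}\le C\|\rho\partial_t u\|_{L^2}+C\|\na u\|_{L^2}^3\le\bar C$, and combined with \eqref{3-3-8} for $\|\na u\|_{L^2}$ this yields the full bound \eqref{3-6}. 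The main obstacle I anticipate is the cubic convection term $\int\rho u\cdot\na(u\cdot\na u)\cdot\partial_t u$: distributing the derivative and keeping every factor in a norm that is either bounded by \eqref{3-3-8} or absorbable into $\mu\|\na\partial_t u\|_{L^2}^2$ requires precisely the elliptic estimate \eqref{3-3-3} and a careful choice of interpolation exponents, and it is where the smallness of $\|\na u\|_{L^4_tL^2_x}$ (hence of $\ve$) is genuinely needed to keep the constants in front of the time-integral terms integrable.
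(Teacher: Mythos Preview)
Your proposal is correct and follows essentially the same route as the paper: differentiate the momentum equation in time, test with $\partial_t u$, estimate the three right-hand terms with H\"older/Sobolev/Gagliardo--Nirenberg together with the elliptic bound \eqref{3-3-3}, apply Gronwall using \eqref{3-3-8}, and then read off $\|\nabla^2 u\|_{L^2}$ from \eqref{3-3-3}. Two small points: in your energy identity the second term should be $-\int\rho u\cdot\nabla(u\cdot\nabla u\cdot\partial_t u)\,dx$ (the $\partial_t u$ sits inside the gradient), which is what produces the additional contribution $\int\rho|u|^2|\nabla u||\nabla\partial_t u|\,dx$ that you correctly anticipate absorbing; and no bootstrap is needed for the super-quadratic piece, since \eqref{3-3-8} already gives $\|\sqrt{\rho}\partial_t u\|_{L^2}^{4/3}\in L^1(0,T)$ and the paper's Gronwall closes directly.
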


\begin{proof} Differentiating the momentum equation with respect to
  $t$, multiplying by $\partial_t u$, and then integrating over
  $\mathbb{R}^3$, one can obtain that  
\begin{equation}\label{3-6-1}
   \ba  & \frac12 \frac{d}{dt} \int \rho |\partial_t u|^2 \, dx 
     + \mu \int |\nabla \partial_t u|^2\,  dx  \\[3mm]
     = & -2\int \rho u\cdot \nabla \partial_t u \cdot \partial_t u \, dx 
     - \int \rho u \cdot \nabla (u \cdot \nabla u\cdot \partial_t u) \, dx 
     - \int (\rho \partial_t u \cdot \nabla ) u \cdot \partial_t u \, dx~.
\ea
\end{equation}

It follows from Sobolev embedding theorem and Gagliardo-Nirenberg inequality that
\be\la{3-6-2} \ba
&  -2 \int \rho u \cdot \nabla \partial_t u  \cdot \partial_t u \, dx \\
&\  \leq   C \|\nabla u\|_{L^2} \|\nabla \partial_t u\|_{L^2}^{\frac32} \|\sqrt{\rho} \partial_t u\|_{L^2}^{\frac12} \\
 &\  \leq  \frac{\mu}{8}\|\nabla \partial_t u\|_{L^2}^2 + C \|\nabla u\|_{L^2}^4 \|\sqrt{\rho} \partial_t u\|_{L^2}^2 ~.
\ea \ee
By Sobolev embedding and the estimate \eqref{3-3-3} , the second
term can be estimated,
\be\la{3-6-3} \ba
  -  \int & \rho u\cdot \nabla(u\cdot \nabla u \cdot \partial_t u) \, dx \\
   \leq &  \int \rho |u| |\nabla u|^2  |\partial_t u| \, dx 
  +  \int \rho |u|^2 |\nabla^2 u| |\partial_t u | \, dx 
  +  \int \rho |u|^2 |\nabla u| |\nabla \partial_t u| \, dx \\ 
  \leq & C   \|u\|_{L^6} \|\nabla u\|_{L^2} 
    \|\nabla u \|_{L^6} \|\partial_t u\|_{L^6}  
  + C \|u\|_{L^6}^2 \|\nabla^2  u\|_{L^2} 
    \|\partial_t u\|_{L^6} \\ 
  & \quad + C \| u\|_{L^6}^2 \|\nabla u\|_{L^6}
  \|\nabla \partial_t u\|_{L^2}  \\ 
  \leq &  C  \|\nabla u\|_{L^2}^4 \|\nabla^2 u \|_{L^2}^2 
  + \frac{\mu}{8}  \|\nabla \partial_t u \|_{L^2}^2 \\
\leq & C  \|\nabla u\|_{L^2}^4 \|\rho \partial_t u \|_{L^2}^2 + C  \|\nabla u \|_{L^2}^{10}
  + \frac{\mu}{8}  \|\nabla \partial_t u \|_{L^2}^2~.
\ea\ee
For the third term on the right handside of \eqref{3-6-1}, utilizing
Gagliardo-Nirenberg inequality and \eqref{3-3-3},  
\be\la{3-6-4}
\ba
  -  \int & (\rho \partial_t u \cdot \nabla ) u \cdot \partial_t u \, dx 
   \leq C   \|\sqrt{\rho} \partial_t u\|_{L^2} \|\nabla u\|_{L^6}
     \|\sqrt{\rho} \partial_t u\|_{L^3}  \\ 
   \leq & C \|\sqrt{\rho
   } \partial_t u \|_{L^2}^{\frac32} 
     \|\nabla \partial_t u\|_{L^2}^{\frac12}  \left(  \|\rho \partial_t u\|_{L^2}  + \|\nabla u\|_{L^2}^3 \right) \\
   \leq & C  \|\sqrt{\rho} \partial_t u\|_{L^2}^{\frac{10}{3}} + C \|\sqrt{\rho}  \partial_t u\|_{L^2}^2 \|\nabla u\|_{L^2}^4  + \frac{\mu}{8} 
      \|\nabla \partial_t u\|_{L^2}^2 ~.
\ea
\ee

Collecting all the estimates \eqref{3-6-2}-\eqref{3-6-4}, one gets that
\begin{equation}\la{3-6-5}
\ba
   &  \frac12 \frac{d}{dt} \int \rho |\partial_t u|^2 \, dx + \frac{\mu}{2} \int |\nabla \partial_t u|^2 \, dx  \\ 
      &\ \ \ \leq  C \|\nabla u\|_{L^2}^4 \|\sqrt{\rho} \partial_t u\|_{L^2}^2 
   + C \|\sqrt{\rho} \partial_t u\|_{L^2}^{\frac43} \|\sqrt{\rho} \partial_t u \|_{L^2}^2+ C\|\nabla u\|_{L^2}^{10}  ~.
\ea
\end{equation}

Utilizing Gronwall's inequality and the estimate \eqref{3-3-8}, we obtain that
\begin{equation}\label{3-6-6}
   \sup_{t\in [0, T]} \|\sqrt{\rho} \partial_t u\|_{L^2}^2 
   + \int_0^T \|\nabla \partial_t u\|_{L^2}^2\,  dt \leq \bar{C}.
\end{equation}

According to the estimate \eqref{3-3-3},  \eqref{3-6-6} implies that
\begin{equation}\label{3-6-7}
     \sup_{t\in [0, T]}\|\nabla^2 u \|_{L^2} 
     \leq C \sup_{t\in [0, T]} \|\sqrt{\rho} \partial_t u\|_{L^2}  + C\|\nabla u\|_{L^2}^3 \leq \bar{C} ~.
\end{equation}
It completes the proof.
\end{proof}


\begin{lemma}[{Estimate for $\|\nabla \rho\|_{H^1}$}]
\label{lemma-3-7}
 Under the assumptions of Theorem~\ref{keypro}, we know that
\begin{equation}\label{3-7}
   \sup_{t\in [0, T]} \|\nabla \rho\|_{H^1} + \int_0^T \|\nabla^3 u\|_{L^2}^2\, dt 
   \leq \bar{C} ~.
\end{equation}
\end{lemma}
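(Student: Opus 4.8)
The plan is to estimate $\|\nabla\rho\|_{H^1}$ by differentiating the transport equation $\partial_t\rho + u\cdot\nabla\rho = 0$, and then to bootstrap the resulting control of $\|\nabla^3 u\|_{L^2_tL^2_x}$ via the Stokes estimate for the momentum equation. First I would differentiate the mass equation once in space to get $\partial_t\nabla\rho + u\cdot\nabla\nabla\rho = -\nabla u\cdot\nabla\rho$; testing against $\nabla\rho$ (the convective term drops because $\div u=0$) gives
\[
   \frac{d}{dt}\|\nabla\rho\|_{L^2}^2 \leq C\|\nabla u\|_{L^\infty}\|\nabla\rho\|_{L^2}^2 .
\]
Similarly, differentiating twice yields $\partial_t\nabla^2\rho + u\cdot\nabla\nabla^2\rho = -2\nabla u\cdot\nabla^2\rho - \nabla^2 u\cdot\nabla\rho$, and testing against $\nabla^2\rho$ gives
\[
   \frac{d}{dt}\|\nabla^2\rho\|_{L^2}^2 \leq C\|\nabla u\|_{L^\infty}\|\nabla^2\rho\|_{L^2}^2 + C\|\nabla^2 u\|_{L^3}\|\nabla\rho\|_{L^6}\|\nabla^2\rho\|_{L^2}.
\]
So by Gronwall it suffices to control $\int_0^T\|\nabla u\|_{L^\infty}\,dt$ (and the lower-order mixed term), which by Sobolev/Gagliardo–Nirenberg is dominated by $\int_0^T\|\nabla^3 u\|_{L^2}^{1/2}\|\nabla^2 u\|_{L^2}^{1/2}\,dt$ up to constants, hence by $(\int_0^T\|\nabla^3 u\|_{L^2}^2\,dt)^{1/4}$ using Lemma~\ref{lemma-3-6}. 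Thus the two bounds in \eqref{3-7} are coupled and must be closed simultaneously.

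Next I would obtain the $\nabla^3 u$ estimate from the momentum equation rewritten as the steady Stokes system
\[
   -\mu\Delta u + \nabla P = -\rho\partial_t u - \rho u\cdot\nabla u =: F,
\]
for which the regularity theory on $\mathbb{R}^3$ gives $\|\nabla^3 u\|_{L^2} + \|\nabla^2 P\|_{L^2} \leq C\|\nabla F\|_{L^2}$. One then estimates $\|\nabla F\|_{L^2} \leq \|\nabla\rho\|_{L^3}\|\partial_t u\|_{L^6} + \|\rho\|_{L^\infty}\|\nabla\partial_t u\|_{L^2} + (\text{terms with }\nabla(\rho u\cdot\nabla u))$, where the convective piece expands into $\|\nabla\rho\|_{L^3}\|u\|_{L^\infty}\|\nabla u\|_{L^6} + \|\rho\|_{L^\infty}\|\nabla u\|_{L^6}^2 + \|\rho\|_{L^\infty}\|u\|_{L^\infty}\|\nabla^2 u\|_{L^6}$, etc. The last term reintroduces $\|\nabla^3 u\|_{L^2}$ (via $\|\nabla^2 u\|_{L^6}\leq C\|\nabla^3 u\|_{L^2}$), so one absorbs it using the smallness/boundedness of $\|u\|_{L^\infty}$, which in turn is controlled by $\|\nabla u\|_{L^2}^{1/2}\|\nabla^2 u\|_{L^2}^{1/2}\leq\bar C$ from Lemma~\ref{lemma-3-6} — here one may need a further interpolation or a localized-in-time argument if $\|u\|_{L^\infty}$ is not small, splitting $[0,T]$ into short subintervals on which the absorption is legitimate. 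Squaring and integrating in time, and using $\int_0^T\|\nabla\partial_t u\|_{L^2}^2\,dt\leq\bar C$ together with $\sup_t\|\sqrt\rho\partial_t u\|_{L^2}\leq\bar C$ and $\sup_t\|\nabla^2 u\|_{L^2}\leq\bar C$, gives
\[
   \int_0^T\|\nabla^3 u\|_{L^2}^2\,dt \leq \bar C\Big(1 + \sup_{t\in[0,T]}\|\nabla\rho\|_{H^1}^2\Big).
\]

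Finally I would close the loop: feeding this into the Gronwall argument for $\|\nabla\rho\|_{H^1}$, the exponential factor is $\exp(C\int_0^T\|\nabla u\|_{L^\infty}\,dt)\leq\exp(C\bar C(1+\sup_t\|\nabla\rho\|_{H^1}^2)^{1/4})$, which is not immediately self-closing; the clean way around this is the standard continuation/bootstrap on subintervals: since at $t=0$ one has $\|\nabla\rho_0\|_{H^1}\leq\bar C$ and the map $t\mapsto\|\nabla\rho(t)\|_{H^1}$ is continuous, one shows it cannot exceed $2\bar C$ on $[0,T]$ by choosing the subinterval length small in terms of $\bar C$ and summing, the number of subintervals being controlled by $T$ and the fixed constants. \textbf{The main obstacle} I anticipate is precisely this coupling between the $\nabla^3 u$ integral bound and the $\nabla^2\rho$ bound through the term $\|u\|_{L^\infty}\|\nabla^3 u\|_{L^2}$ in the Stokes estimate: one cannot make $\|u\|_{L^\infty}$ globally small (only $\|u_0\|_{\dot H^{1/2}}$ is small, and $\|\nabla u\|_{L^2}$ is only known to be bounded, not small, on a fixed interval), so the absorption must be done either by a time-splitting argument or by a more careful interpolation that trades a fractional power of $\|\nabla^3 u\|_{L^2}$ against the already-controlled quantities — getting the bookkeeping of the $\bar C$'s right there is the delicate part, everything else being routine energy estimates of the type already carried out in Lemmas~\ref{lemma-3-3} and~\ref{lemma-3-6}.
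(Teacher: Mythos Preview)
Your transport-equation differentiations and the Gronwall setup for $\|\nabla\rho\|_{L^2}$ and $\|\nabla^2\rho\|_{L^2}$ are exactly right, and your final Stokes estimate for $\|\nabla^3 u\|_{L^2}$ is essentially the paper's as well. But the route you take to control $\int_0^T\|\nabla u\|_{L^\infty}\,dt$ creates a genuine circularity that your proposed bootstrap does not break.

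First, the ``main obstacle'' you flag is a red herring coming from a suboptimal H\"older split: the term $\rho\,u\cdot\nabla^2 u$ in $\nabla F$ is estimated in $L^2$ by $\bar\rho\,\|u\|_{L^\infty}\|\nabla^2 u\|_{L^2}$ (not $\|\nabla^2 u\|_{L^6}$), and both factors are already bounded by Lemma~\ref{lemma-3-6}; no absorption is needed there.

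The real gap is elsewhere. You bound $\|\nabla u\|_{L^\infty}$ via $\|\nabla^3 u\|_{L^2}^{1/2}\|\nabla^2 u\|_{L^2}^{1/2}$, and then bound $\|\nabla^3 u\|_{L^2}$ by $C(1+\|\nabla\rho\|_{L^3})(\|\nabla\partial_t u\|_{L^2}+\|\nabla u\|_{H^1}^2)$. Substituting into the Gronwall inequality for $y(t)=1+\|\nabla\rho(t)\|_{H^1}^2$ gives a differential inequality of the form $y'\le C\,g(t)\,y^{5/4}$ with $g\in L^2(0,T)$; this is a Riccati-type inequality whose solutions can blow up in finite time, and the blow-up time depends on $y(0)$ and $\int g$, not just on $T$. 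Your time-splitting does not help: on each subinterval the growth factor depends on the current value of $y$, so the allowed subinterval length shrinks geometrically and the subintervals need not cover $[0,T]$.

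The paper decouples the two estimates by getting $\|\nabla u\|_{L^1_tL^\infty_x}$ \emph{without} passing through $\nabla^3 u$ or $\nabla\rho$. Instead of the $H^1$ Stokes estimate on $F$, it uses the $L^4$ Stokes estimate
\[
\|\nabla^2 u\|_{L^4}\le C\|\rho\partial_t u\|_{L^4}+C\|\rho u\cdot\nabla u\|_{L^4}
\le C\|\sqrt{\rho}\,\partial_t u\|_{L^2}^{1/4}\|\nabla\partial_t u\|_{L^2}^{3/4}+C\|\nabla u\|_{L^2}^{3/4}\|\nabla u\|_{L^6}^{5/4},
\]
which involves only quantities controlled by Lemma~\ref{lemma-3-6} and \emph{not} $\nabla\rho$. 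Hence $\|\nabla^2 u\|_{L^2(0,T;L^4)}\le\bar C$, and then $\|\nabla u\|_{L^\infty}\le C\|\nabla u\|_{L^6}^{1/3}\|\nabla^2 u\|_{L^4}^{2/3}$ gives $\|\nabla u\|_{L^1(0,T;L^\infty)}\le\bar C$ directly. Gronwall on $\|\nabla\rho\|_{L^2}$ and $\|\nabla^2\rho\|_{L^2}$ now closes with no feedback, and $\int_0^T\|\nabla^3 u\|_{L^2}^2\,dt\le\bar C$ follows afterward from your own Stokes estimate. Replace your $\|\nabla u\|_{L^\infty}\lesssim\|\nabla^3 u\|_{L^2}^{1/2}\|\nabla^2 u\|_{L^2}^{1/2}$ step with this $L^4$ route and the proof goes through cleanly.
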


\begin{proof}
Differentiating the mass equation with respect to $x_j$, $j=1, 2,3,$
we find that $\partial_j \rho$ satisfies  
\begin{equation}\label{3-7-1}
     \partial_t \partial_j \rho + u \cdot \nabla \partial_j \rho  
   = -\partial_j u \cdot \nabla \rho.
\end{equation}
Then multiplying \eqref{3-7-1} by $\partial_j \rho$, integrating over
$\mathbb{R}^3$, and summing over the index $j$, one gets that  
\begin{equation} \label{3-7-2}
   \frac{d}{dt} \int |\nabla \rho|^2 \, dx  
   \leq C \int |\nabla u|\cdot |\nabla \rho|^2 \, dx 
   \leq C \|\nabla u\|_{L^\infty} \|\nabla \rho\|_{L^2}^2 ~. 
\end{equation}

To derive the appropriate bound for $\int_0^T \|\nabla u\|_{L^\infty}\, 
dt $, we make use of the elliptic estimates related with the momentum
equation. In fact,  
\begin{equation}\nonumber \ba
   \|\nabla^2 u \|_{L^4}  & \leq C \|\rho \partial_t u\|_{L^4} + C \|(\rho u\cdot \nabla) u\|_{L^4} \\
   & \leq C \|\nabla \partial_t u\|_{L^2}^{\frac34}
      \|\sqrt{\rho} \partial_t u\|_{L^2}^{\frac14} + C \|u\|_{L^{12}} \|\nabla u\|_{L^6} \\ 
   & \leq C \|\nabla \partial_t u\|_{L^2}^{\frac34}
   \|\sqrt{\rho} \partial_t u \|_{L^2}^{\frac14} + C \|\nabla u\|_{L^2}^{\frac34} \|\nabla u\|_{L^6}^{\frac54} ~,
\ea
\end{equation}
which together with Lemma~\ref{lemma-3-6} gives that 
\begin{equation}\label{3-7-4}
   \|\nabla^2 u\|_{L^2(0, T; L^4)} \leq \bar{C} ~.
\end{equation}
By virtue of the Gagliardo-Nirenberg inequality, 
\begin{equation}\nonumber
   \|\nabla u\|_{L^\infty} \leq C \|\nabla u\|_{L^6}^{\frac13} \|\nabla^2 u\|_{L^4}^{\frac23} ~,
\end{equation}
hence $\|\nabla u\|_{L^1(0, T; L^\infty)} \leq \bar{C}$. Consequently,
by  Gronwall's inequality, \eqref{3-7-2} gives a bound for $\|\nabla
\rho\|_{L^2}$.  Similar argument shows that 
\begin{equation}\label{3-7-6} \ba
   \frac{d}{dt} \int |\nabla^2 \rho |^2 \, dx  & \leq C \int \left( |\nabla u| 
     |\nabla^2 \rho|^2 + |\nabla^2 u| |\nabla \rho| |\nabla^2 \rho|
   \right) \, dx \\ 
   & \leq C \|\nabla u\|_{L^\infty} \|\nabla^2 \rho\|_{L^2}^2 
   + C \|\nabla^2 u\|_{L^3} \|\nabla \rho\|_{L^6} \|\nabla^2
   \rho\|_{L^2}  \\
   & \leq C \left( \|\nabla u\|_{L^\infty} + \|\nabla^2 u\|_{L^3}
   \right) \|\nabla^2 \rho\|_{L^2}^2 ~.
\ea
\end{equation}
Note that $\|\nabla^2 u\|_{L^3} \leq \|\nabla^2 u\|_{L^2}^{1/3}
\|\nabla^2 u\|_{L^4}^{2/3}$. Combining this fact with \eqref{3-6-7}
and \eqref{3-7-4}, we get that 
\begin{equation} \nonumber
   \nabla^2 u \in L^1(0, T; L^3) ~.
\end{equation}
Hence, the Gronwall inequality gives a bound for $\|\nabla^2 \rho\|_{L^2}$. 

Finally, using the reguality theory for Stokes equations, one can obtain that
\begin{equation}\label{3-7-7}\ba
   \|\nabla^3 u\|_{L^2} &  \leq C \left( \|\rho \partial_t u\|_{H^1} 
     + \|\rho u\cdot \nabla u\|_{H^1}  \right) \\[2mm]
   & \leq C \left(\|\nabla \rho\|_{L^3} + 1 \right)
      \left(\|\nabla \partial_t u\|_{L^2}  + \|\nabla u\|_{H^1}^2 \right), 
\ea
\end{equation}
which implies that $\|\nabla^3 u\|_{L^2(0, T; L^2)} \leq \bar{C}$. 
\end{proof}

\subsection{Proof of Theoerem \ref{main-result} }
With the a priori estimates in subsection 3.1 in hand, we are prepared
for the proof of Thorem~\ref{main-result}.
\begin{proof}
According to Theorem \ref{thm-Choe-Kim}, there exists a $T_*>0$ such
that the Cauchy problem \eqref{ns}-\eqref{initial} has a unique local
strong solution $(\rho, u, P)$ on $\mathbb{R}^3 \times (0, T_*]$,
where $T_*$ depends on $\|\rho_0\|_{L^{\frac32}\cap H^2} $, $\|\nabla
u_0\|_{H^1} $ and $\|g\|_{L^2}$. We will show that this local solution
extends to a global one.  

It follows from the integrability property of the local strong solution that $A(0)=0$. 
Hence there exists a $T_1 \in (0, T_*)$ such that \eqref{assumption} holds for $T= T_1$. Set 
\[
   T^* = \sup \{ T | \ (\rho, u, P)\  \mbox{is a strong solution on}\
     \mathbb{R}^3 \times (0, T] \ \mbox{and}\  A(T)\leq 2 \} ~. 
\]
Then $T^* \geq T_1 >0$. 

 The claim is that $T^* = \infty$, for otherwise $T^* < \infty$, which
 we will assume for an argument by contradiction. First, it
 follows from Lemmas \ref{lemma-3-6} and \ref{lemma-3-7} that 
\[
   \|\nabla u(t)\|_{H^1} \leq \bar{C}(T^*) ~, \quad 
   \|\rho(t)\|_{L^{\frac32} \cap H^2} \leq \bar{C}(T^*) ~, \quad \mbox{for
  every}\ 0<t<T^* ~,
\]
where $\bar{C}(T^*)$ depends on $T^*$ and the initial data. Secondly, 
\[
   \mu \Delta u - \nabla P = \rho \partial_t u -  (\rho u\cdot \nabla ) u, 
\]
which implies that
\be\nonumber\ba
 \left  \|\rho^{-\frac12} \left( \mu \Delta u - \nabla P \right)  \right \|_{L^2} 
   \leq  & \|\sqrt{\rho} \partial_t u  \|_{L^2} + \|( \sqrt{\rho} u
      \cdot \nabla) u\|_{L^2}  \\
   \leq &  \|\sqrt{\rho} \partial_t u  \|_{L^2} + C(\bar{\rho})
   \|\nabla u\|_{H^1}^{\frac32} \cdot \|\sqrt{\rho}
   u\|_{L^2}^{\frac12} \\
    \leq & \bar{C}(T^*) ~.
\ea
\ee
These uniform estimates allow us to construct a new unique local
strong solution from $(\rho(T_2), u(T_2))$ for any $0<T_2 < T^*$, for which
the life span is uniformly bounded from below by some $\Delta T$. By
uniqueness, on the interval $[T_2, T^*)$, this new strong solution
coincides with the original one. Choose some $T_2$ close enough to $T^*$,
such that $T_2+ \Delta T > T^*$, giving a local strong solution  
$(\rho, u, P)$ on $(0, T_2 + \Delta T]$. 

Next, we show that \eqref{assumption} holds on $(0, T_2+ \Delta T]$. 
For the strong solution $(\rho, u, P)$ on $[0, T_2+\Delta T]$, let 
\be \label{time} 
   T^{**} = \sup \left\{ T | \ \ \ A(T)\leq 2 \right\} ~.
\ee 
It follows from Theorem~\ref{keypro} that in fact
\be \nonumber
   A(T^{**})\leq 1~.
\ee
 Hence $T^{**} = T_2 + \Delta T$ and
\eqref{assumption} holds on  $(0, T_2+ \Delta T]$. In fact this is a
contradiction to the assumption that  $T^*< \infty$ is the maximal
time, since $T_2 + \Delta T > T^*$. Furthermore the decay property of
the solution is implied in the proof of Lemma \ref{lemma-3-3}. Hence, the proof is complete.  \end{proof}


\section{Proof of Theorem \ref{main-result} for the case of a bounded domain}
In this section, we assume that the domain $\Omega$ is bounded and
smooth domain in $\mathbb{R}^3$. As before, 
for strong solutions $(\rho, u , P)$
to \eqref{ns}-\eqref{initial} on $\Omega \times [0, T]$, define  
\be \label{A-4}
   A(t)= \|\nabla u\|_{L^4(0, t; \ L^2)}~,\ \ \ \ \ 0\leq t \leq T~.  
\ee

Proceeding as in the above section, we establish {\it a priori}
estimates for $(\rho, u)$ which guarantee the extension of the local solution. 
\begin{theorem}\label{prop-4-1}
Under the assumptions of Theorem \ref{thm-Choe-Kim}, there exists some
positive constant $\varepsilon$ depending only on $\mu$, $\Omega$ and 
$\bar{\rho}$, such that if $ \|u_0\|_{\dot{H}^{\frac12} }   \leq \varepsilon$ and $(\rho, u, P)$ is a
strong solution to \eqref{ns}-\eqref{initial}, satisfying  
\be\label{assumption4} 
   A(T)  \leq 2,
\ee
then it in fact holds that
\[
  A(T)  \leq 1.
\]
\end{theorem}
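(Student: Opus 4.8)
The plan is to follow the same energy-estimate strategy as in the proof of Theorem~\ref{keypro} for $\mathbb{R}^3$, adapting each step to the bounded-domain setting where the key replacement is that Helmholtz--Weyl decomposition and Calder\'on--Zygmund estimates are substituted by the regularity theory for the Stokes operator on $\Omega$ with homogeneous Dirichlet boundary conditions. First I would introduce the auxiliary linear Cauchy problem \eqref{Cauchy-system} on $\Omega$ with $w|_{\pa\Omega}=0$, for which Choe--Kim's existence and uniqueness theory applies, and write down the basic energy identity giving $\frac12\int\rho|w(T)|^2\,dx+\mu\int_0^T\|\nabla w\|_{L^2}^2\,dt\le C(\bar\rho)\|w_0\|_{L^2}^2$. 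The new point is that we must use $\|\nabla w\|_{L^2}$ to control $\|w\|_{L^6}$ (which still holds by Sobolev embedding on $\Omega\subset\mathbb{R}^3$, using $w\in H_0^1$) and, crucially, we need $\|\nabla^2 w\|_{L^2}\le C(\|\rho\pa_t w\|_{L^2}+\|(\rho u\cdot\nabla)w\|_{L^2})$, which now follows from the $L^2$-regularity of the stationary Stokes system $-\mu\Delta w+\nabla\tilde P=f$, $\div w=0$, $w|_{\pa\Omega}=0$ on a smooth bounded domain, with constant depending on $\Omega$.

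Second, I would reproduce the chain \eqref{3-3-1}--\eqref{3-3-5}: multiply the momentum equation for $w$ by $\pa_t w$, integrate over $\Omega$, absorb the convective term via $\|u\|_{L^6}\|\nabla w\|_{L^3}\|\sqrt\rho\pa_t w\|_{L^2}$ and interpolation $\|\nabla w\|_{L^3}\le C\|\nabla w\|_{L^2}^{1/2}\|\nabla w\|_{L^6}^{1/2}$, then use the Stokes estimate and Young's inequality to get $\frac12\int\rho|\pa_t w|^2\,dx+\frac{\mu}{2}\frac{d}{dt}\int|\nabla w|^2\,dx\le C\|\nabla u\|_{L^2}^4\|\nabla w\|_{L^2}^2$. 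Gronwall together with the assumption $A(T)\le 2$ yields $\int_0^t\int\rho|\pa_t w|^2\,dx\,ds+\|\nabla w(t)\|_{L^2}^2\le C e^C\|\nabla w_0\|_{L^2}^2$. Then, exactly as before, for fixed $(\rho,u)$ the map $w_0\mapsto\nabla w$ is linear and bounded both from $L^2(\Omega)$ into $L^\infty(0,T;L^2)$ (by the energy estimate) and from $\dot H^1$ into $L^2(0,T;L^2)$ (by the $\pa_t w$ estimate), so Lemmas~\ref{lemma-Besov-interpolation}, \ref{lemma-vector-Lorentz-interpolation} and \ref{lemma-operator-interpolation} give $\|\nabla w\|_{L^4(0,T;L^2)}\le C_1\|w_0\|_{\dot H^{1/2}}$ with $C_1$ independent of $T$ by the scaling $[0,T]\to[0,1]$. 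Taking $w=u$, $w_0=u_0$ and choosing $\varepsilon\le 1/C_1$ gives $A(T)=\|\nabla u\|_{L^4(0,T;L^2)}\le C_1\varepsilon\le 1$, which is the desired conclusion.

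The main obstacle is verifying that the second-order elliptic estimate used in \eqref{3-3-2}--\eqref{3-3-3} survives on a bounded domain, i.e.\ that $\|\nabla^2 w\|_{L^2(\Omega)}\le C(\Omega)\,\|-\mu\Delta w+\nabla\tilde P\|_{L^2(\Omega)}$ for the Dirichlet Stokes problem; this is classical (Cattabriga / Agmon--Douglis--Nirenberg type regularity for the Stokes system on smooth bounded domains), but it is where the dependence of $\varepsilon$ on $\Omega$ enters, and one must be careful that on $\Omega$ one only controls $\|\nabla^2 w\|_{L^2}$ and not a pure homogeneous norm, though since $w\in H_0^1$ with $\|w\|_{H^1}\simeq\|\nabla w\|_{L^2}$ (Poincar\'e) this causes no difficulty. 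A secondary technical point is that the interpolation identity $\dot H^{1/2}(\Omega)\subseteq[L^2,\dot H^1]_{1/2,4}$ should be read through the definition of $\dot H^s(\Omega)$ as the restriction of $\dot H^s(\mathbb{R}^3)$, combined with Poincar\'e's inequality on the bounded domain so that $L^2$ and $H^1_0$ norms behave as expected; once this is in place the operator-interpolation argument is identical to the whole-space case. The decay estimate \eqref{3-3-add}, if needed here, follows by multiplying the differential inequality for $\|\nabla u\|_{L^2}^2$ by $t$ and invoking the basic energy inequality, exactly as in the proof of Lemma~\ref{lemma-3-3}.
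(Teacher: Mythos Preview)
Your approach is essentially the paper's own strategy --- linearize, derive the two endpoint bounds, interpolate --- but there are two points where your write-up diverges from what actually happens on a bounded domain.

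First, the differential inequality you claim,
\[
\tfrac12\int\rho|\pa_t w|^2\,dx+\tfrac{\mu}{2}\tfrac{d}{dt}\int|\nabla w|^2\,dx\le C\|\nabla u\|_{L^2}^4\|\nabla w\|_{L^2}^2,
\]
is not what you get by literally reproducing \eqref{3-3-1}--\eqref{3-3-5}. The step \eqref{3-3-2} in $\mathbb{R}^3$ uses the homogeneous embedding $\|\nabla w\|_{L^6}\le C\|\nabla^2 w\|_{L^2}$, which fails on a bounded domain (where $\nabla w\notin H_0^1$ in general); you only have $\|\nabla w\|_{L^6}\le C\|\nabla w\|_{H^1}$. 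The paper keeps the resulting lower-order contribution, obtaining instead
\[
\int\rho|\pa_t w|^2\,dx+\mu\tfrac{d}{dt}\int|\nabla w|^2\,dx\le C\|\nabla u\|_{L^2}^4\|\nabla w\|_{L^2}^2+\mu\|\nabla w\|_{L^2}^2,
\]
and after Gronwall the extra $\mu\int_0^T\|\nabla w\|_{L^2}^2\,dt$ is absorbed via the basic energy inequality \eqref{energy-4} together with Poincar\'e, giving $\le C\int\rho_0|w_0|^2\,dx\le C\|\nabla w_0\|_{L^2}^2$. Your remark that ``$\|w\|_{H^1}\simeq\|\nabla w\|_{L^2}$ by Poincar\'e'' does not address this: the issue is the lower-order part of $\|\nabla w\|_{H^1}$, not of $\|w\|_{H^1}$. (Alternatively one can invoke the full Stokes estimate $\|w\|_{H^2}\le C\|f\|_{L^2}$ with no lower-order term, but then you must say so rather than cite \eqref{3-3-2}.)

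Second, a small slip: in your interpolation paragraph the endpoints are swapped. The energy identity gives $w_0\in L^2\mapsto \nabla w\in L^2(0,T;L^2)$, while the Gronwall estimate gives $w_0\in \dot H^1\mapsto \nabla w\in L^\infty(0,T;L^2)$; you wrote them the other way around. The interpolation conclusion is unaffected.
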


The proof for Theorem \ref{prop-4-1} is similar to that for
Theorem \ref{keypro}, with some slight changes due to the
presence of several additional lower order terms that appear because
of the existence of a boundary. These terms are controlled using the  
exponential decay of the kinetic energy. 

\begin{lemma} Let $(\rho, u , P)$ be a strong solution to
  \eqref{ns}-\eqref{initial}. Then for every $0\leq t \leq T$, 
  \be \nonumber
    \|\rho(t)\|_{L^\infty} = \|\rho_0\|_{L^\infty} = \bar{\rho}~.
  \ee
\end{lemma}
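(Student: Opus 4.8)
The final statement to prove is the lemma asserting that for a strong solution $(\rho,u,P)$ to \eqref{ns}-\eqref{initial} on a bounded smooth domain $\Omega$, one has $\|\rho(t)\|_{L^\infty} = \|\rho_0\|_{L^\infty} = \bar{\rho}$ for every $0 \le t \le T$.

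\medskip

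The plan is to exploit that the mass equation $\partial_t \rho + \operatorname{div}(\rho u) = 0$ reduces, because $\operatorname{div} u = 0$, to the pure transport equation $\partial_t \rho + u \cdot \nabla \rho = 0$. This is exactly the observation already used in the proof of Lemma~\ref{lemma-3-1} for the case $\Omega = \mathbb{R}^3$; the only point that needs a word of care on a bounded domain is the boundary behavior of the flow. First I would note that, since $u$ is a strong solution, $u$ is sufficiently regular (in particular $u \in C([0,T]; D^{2,2})$, hence $\nabla u$ is bounded in space after the a priori estimates, or at worst $\nabla u \in L^1(0,T; L^\infty)$ as established in Lemma~\ref{lemma-3-7}) and vanishes on $\partial\Omega$, so the associated particle trajectory map $X(t, x)$ defined by $\frac{d}{dt} X(t,x) = u(t, X(t,x))$, $X(0,x) = x$, is a well-defined measure-preserving homeomorphism of $\bar\Omega$ onto itself that fixes $\partial\Omega$ (trajectories starting in the interior cannot reach the boundary since $u|_{\partial\Omega} = 0$). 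Then $\rho(t, X(t,x)) = \rho_0(x)$ along trajectories, so $\rho(t, \cdot)$ is a rearrangement of $\rho_0$, and in particular its essential supremum is preserved: $\|\rho(t)\|_{L^\infty} = \|\rho_0\|_{L^\infty}$.

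\medskip

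Alternatively, and perhaps more cleanly within the $L^p$ framework already in play, I would argue by $L^p$ energy estimates: multiplying the transport equation by $p \rho^{p-1}$ (for $\rho \ge 0$) and integrating over $\Omega$, the convective term $\int_\Omega u \cdot \nabla(\rho^p)\, dx = -\int_\Omega (\operatorname{div} u)\, \rho^p\, dx + \int_{\partial\Omega} (u\cdot n)\, \rho^p\, dS = 0$ vanishes because $\operatorname{div} u = 0$ in $\Omega$ and $u = 0$ on $\partial\Omega$; hence $\frac{d}{dt}\|\rho(t)\|_{L^p}^p = 0$ for each finite $p$ (using that $\rho_0 \in L^{3/2}\cap H^2 \hookrightarrow L^\infty$, so all these norms are finite), and letting $p \to \infty$ gives $\|\rho(t)\|_{L^\infty} = \|\rho_0\|_{L^\infty}$. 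The nonnegativity $\rho \ge 0$ is preserved for the same reason (it is also transported), so $\|\rho_0\|_{L^\infty} = \bar\rho$ as defined. One should remark that, just as for the whole-space lemma, the same computation shows $\rho(t) \ge 0$ is propagated.

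\medskip

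The main obstacle — really the only subtlety compared with the $\mathbb{R}^3$ case — is justifying that the boundary term $\int_{\partial\Omega}(u\cdot n)\rho^p\, dS$ genuinely vanishes and that the formal manipulations are legitimate for the class of strong solutions at hand; this is handled by the no-slip condition $u = 0$ on $\partial\Omega$ and the regularity of strong solutions (which makes $\rho \in L^\infty(0,T; H^2)$, so traces and the divergence theorem are valid). Since this lemma is the bounded-domain analogue of Lemma~\ref{lemma-3-1}, whose proof in the text is a single sentence, I expect the write-up here to be equally brief: state that $\operatorname{div} u = 0$ together with $u|_{\partial\Omega} = 0$ turns the mass equation into a transport equation with a divergence-free, boundary-tangent velocity field, so the $L^p$ norms of $\rho$ — in particular the $L^\infty$ norm — are conserved.
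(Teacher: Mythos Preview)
Your proposal is correct and follows essentially the same approach as the paper: the paper's proof of the analogous Lemma~\ref{lemma-3-1} (for $\Omega=\mathbb{R}^3$) is a single sentence observing that $\operatorname{div} u = 0$ makes the mass equation a transport equation, and for the bounded-domain version the paper gives no proof at all, implicitly relying on the same observation. Your additional remarks about the boundary (no-slip condition killing the boundary term, or equivalently trajectories not leaving $\overline{\Omega}$) are the natural and correct justification for why nothing changes in the bounded case.
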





\begin{lemma}\label{lemma-4-3} [Modified Energy Estimate] Let 
  $(\rho, u, P)$ be a strong solution to \eqref{ns}-\eqref{initial}. 
  Then it satisfies
\be\la{4-3-1}
    \|\sqrt{\rho} u(t)\|_{L^2} \leq  \|\sqrt{\rho_0} u_0\|_{L^2} \exp
    \{ -C(\mu, \Omega, \bar{\rho}) t\} ~.
\ee
\end{lemma}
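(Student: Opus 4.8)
The plan is to derive a differential inequality for the kinetic energy $\|\sqrt{\rho}u(t)\|_{L^2}^2$ and then apply Gronwall's inequality. First I would multiply the momentum equation $\partial_t(\rho u) + \div(\rho u\otimes u) - \mu\Delta u + \nabla P = 0$ by $u$ and integrate over $\Omega$. Using the mass equation, the divergence-free condition $\div u = 0$, and the no-slip boundary condition $u|_{\partial\Omega}=0$ (which kills the boundary terms from integration by parts and makes the pressure term vanish), the convective and time-derivative terms combine into $\frac{1}{2}\frac{d}{dt}\int\rho|u|^2\,dx$, yielding the standard energy identity
\be \nonumber
   \frac12 \frac{d}{dt} \int_\Omega \rho |u|^2 \, dx + \mu \int_\Omega |\nabla u|^2 \, dx = 0 ~.
\ee

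The key new ingredient, compared to the whole-space case, is that on a bounded domain we have a Poincar\'e inequality: $\|u\|_{L^2} \leq C(\Omega)\|\nabla u\|_{L^2}$ for $u\in H_0^1(\Omega)$. Combined with the uniform bound $\|\rho(t)\|_{L^\infty} = \bar\rho$ from the preceding lemma, this gives
\be \nonumber
   \int_\Omega \rho |u|^2 \, dx \leq \bar\rho \|u\|_{L^2}^2 \leq \bar\rho\, C(\Omega)^2 \|\nabla u\|_{L^2}^2 ~,
\ee
so that $\mu\int_\Omega |\nabla u|^2\,dx \geq \frac{\mu}{\bar\rho C(\Omega)^2}\int_\Omega \rho|u|^2\,dx$. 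Substituting into the energy identity yields the differential inequality
\be \nonumber
   \frac{d}{dt} \int_\Omega \rho |u|^2 \, dx \leq -\frac{2\mu}{\bar\rho\, C(\Omega)^2} \int_\Omega \rho |u|^2 \, dx ~.
\ee

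Then Gronwall's inequality gives $\int_\Omega \rho|u(t)|^2\,dx \leq \left(\int_\Omega \rho_0|u_0|^2\,dx\right)\exp\{-2C(\mu,\Omega,\bar\rho)t\}$ with $C(\mu,\Omega,\bar\rho) = \frac{\mu}{\bar\rho\, C(\Omega)^2}$; taking square roots gives \eqref{4-3-1}. I expect no serious obstacle here — the only points requiring a little care are justifying the integrations by parts at the level of strong solutions (which is routine given the regularity in Theorem~\ref{thm-Choe-Kim}) and making sure the constant in the exponent is correctly tracked through the Poincar\'e constant and $\bar\rho$. The essential reason this works on a bounded domain but not on $\mathbb{R}^3$ is precisely the availability of the Poincar\'e inequality, which converts the dissipation term into a coercive lower bound on the energy itself.
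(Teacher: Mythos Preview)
Your proposal is correct and follows exactly the same route as the paper: derive the basic energy identity, use the Poincar\'e inequality together with the bound $\|\rho\|_{L^\infty}=\bar\rho$ to convert the dissipation into a lower bound on $\int\rho|u|^2\,dx$, and conclude by Gronwall. The paper's argument is identical, only with the constant written less explicitly as $C(\Omega,\bar\rho)$.
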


\begin{proof} According to the basic energy estimate, 
\be \la{4-3-2} 
     \frac12 \frac{d}{dt} \int \rho |u|^2 \, dx  + \mu \int |\nabla u|^2 \, dx = 0 ~. 
\ee 
By the Poincar\'e inequality, 
\[
   \int \rho |u|^2 \,  dx \leq C(\Omega, \bar{\rho}) \|\nabla u\|_{L^2}^2
   ~,
\]
which, when incorporated into \eqref{4-3-2}, yields
\be\nonumber
   \frac{d}{dt} \int \rho |u|^2 \, dx + \frac{2\mu}{C(\Omega,
     \bar{\rho})} \int \rho |u|^2 \, dx \leq 0 ~.
\ee
Consequently
\be \nonumber
   \int \rho(t) |u(t)|^2 \, dx \leq \int \rho_0 |u_0|^2 \, dx \cdot
   \exp \{-C(\mu, \Omega, \bar{\rho}) t\} ~.
\ee
\end{proof}


\begin{lemma}\label{lemma-4-3-add} Let $(\rho, u, P)$ be a strong
  solution to \eqref{ns}-\eqref{initial}. Then 
\be \la{4-3-5}
   \sup_{t\in [0, T]} t  \int \rho |u|^2 \, dx 
   + \int_0^T t \int |\nabla u|^2 \, dx dt 
   \leq C\int \rho_0 |u_0|^2 \, dx ~.
\ee
\end{lemma}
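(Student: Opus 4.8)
The statement to prove is Lemma 4.4 (labelled \texttt{lemma-4-3-add}), asserting the weighted-in-time bound
\[
   \sup_{t\in[0,T]} t\int\rho|u|^2\,dx + \int_0^T t\int|\nabla u|^2\,dx\,dt \leq C\int\rho_0|u_0|^2\,dx ~.
\]

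\textbf{Plan of proof.} The natural approach is to multiply the basic energy identity \eqref{4-3-2} by $t$ and integrate in time. Starting from
\[
   \frac12\frac{d}{dt}\int\rho|u|^2\,dx + \mu\int|\nabla u|^2\,dx = 0 ~,
\]
multiply by $t$ and rewrite $t\,\frac{d}{dt}\int\rho|u|^2\,dx = \frac{d}{dt}\left(t\int\rho|u|^2\,dx\right) - \int\rho|u|^2\,dx$. This yields
\[
   \frac12\frac{d}{dt}\left(t\int\rho|u|^2\,dx\right) + \mu\, t\int|\nabla u|^2\,dx = \frac12\int\rho|u|^2\,dx ~.
\]
Integrating from $0$ to $T$, the left side produces exactly the two quantities we want to bound (the boundary term at $t=0$ vanishes), while the right side is $\frac12\int_0^T\int\rho|u|^2\,dx\,dt$. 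So everything reduces to controlling $\int_0^T\|\sqrt{\rho}\,u\|_{L^2}^2\,dt$ by $\int\rho_0|u_0|^2\,dx$, uniformly in $T$.

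\textbf{The main point.} That time integral is precisely where Lemma~\ref{lemma-4-3} does the work: from \eqref{4-3-1}, $\|\sqrt{\rho}\,u(t)\|_{L^2}^2 \leq \|\sqrt{\rho_0}\,u_0\|_{L^2}^2\exp\{-2C(\mu,\Omega,\bar\rho)t\}$, hence
\[
   \int_0^T\|\sqrt{\rho}\,u(t)\|_{L^2}^2\,dt \leq \|\sqrt{\rho_0}\,u_0\|_{L^2}^2\int_0^\infty e^{-2C t}\,dt = \frac{1}{2C}\int\rho_0|u_0|^2\,dx ~,
\]
which is finite and independent of $T$. Combining this with the identity above gives the claimed estimate with a constant $C$ depending only on $\mu$, $\Omega$, $\bar\rho$. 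This is why the bounded-domain case is actually easier here than the whole-space case: the Poincar\'e inequality forces genuine exponential decay of the kinetic energy, so the weighted integral $\int_0^T t\,\|\nabla u\|_{L^2}^2\,dt$ converges even as $T\to\infty$, whereas in $\mathbb{R}^3$ one only gets the weaker conclusion \eqref{3-3-7}.

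\textbf{Expected obstacle.} There is essentially no analytic obstacle; the only mild care needed is the justification that $t\int\rho|u|^2\,dx\to 0$ as $t\to 0^+$ (immediate from $u\in C([0,T];D^{1,2}_{0,\sigma})$ and $\rho$ bounded, so $\int\rho|u|^2\,dx$ is bounded near $t=0$) and that the manipulations are legitimate for strong solutions, which they are since $\frac{d}{dt}\int\rho|u|^2\,dx$ is integrable. So the proof is short: state the differentiated-and-weighted energy identity, integrate, and invoke Lemma~\ref{lemma-4-3} to bound the right-hand side.
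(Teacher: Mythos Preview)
Your proof is correct and is essentially identical to the paper's own argument: multiply the basic energy identity by $t$, use the product rule $t\,\tfrac{d}{dt}(\cdot)=\tfrac{d}{dt}(t\,\cdot)-(\cdot)$, integrate in time, and then invoke the exponential decay of Lemma~\ref{lemma-4-3} to bound $\int_0^T\int\rho|u|^2\,dx\,dt$ by $C\int\rho_0|u_0|^2\,dx$ uniformly in $T$. The only cosmetic difference is that the paper phrases the first step as ``multiply the momentum equation by $tu$'' rather than ``multiply the energy identity by $t$'', which amounts to the same thing.
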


\begin{proof}
Multiplying the momentum equation by $t u$ and integrating over $\Omega$, 
\be \nonumber
   \frac{t}{2} \frac{d}{dt} \int \rho |u|^2 \, dx + \mu t
   \int |\nabla u|^2 \, dx = 0 ~,
\ee
which implies that  
\be \nonumber
   \sup_{t\in [0, T]} \frac{t}{2}  \int \rho |u|^2 \, dx  
   + \mu \int_0^T t    \int |\nabla u|^2 \, dx dt  
   = \frac12   \int_0^T   \int \rho |u|^2 \, dx dt ~.
\ee
Taking \eqref{4-3-1} into account, one finds that 
\be 
   \sup_{t\in [0, T]} t \int \rho |u|^2 \, dx 
   + \mu \int_0^T t   \int |\nabla u|^2 \, dx  dt
   \leq C \int \rho_0 |u_0|^2 \, dx ~,
\ee
which completes the proof.
\end{proof}


\begin{lemma}\la{lemma-4-4}Let $(\rho, u, P)$ be a strong solution of
  the equations \eqref{ns}-\eqref{initial}.  Suppose that
  \be \nonumber
A(T) \leq 2,
\ee
 then there exists some constant $C_1$ depending on $\mu, \bar{\rho}, \Omega$, such that
  \be \label{4-4}
    \|\nabla u\|_{L^4(0, T; L^2 )} \leq C_1 \|u_0\|_{\dot{H}^{\frac12}} ~,
  \ee
and it holds that 
\be \la{4-4-add}
\sup_{t\in [0, T]}  t \|\nabla u (t)\|_{L^2}^2 \leq C \int \rho_0 |u_0|^2 \, dx.
\ee
Note that $C_1$ does not depend on $T$.
\end{lemma}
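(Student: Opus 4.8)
The plan is to mimic the proof of Lemma~\ref{lemma-3-3}, tracking carefully the extra terms that arise from the Poincar\'e-type inequalities now available on the bounded domain, and absorbing them using the exponential decay of kinetic energy from Lemma~\ref{lemma-4-3}. First I would set up the linear Cauchy problem for $(w, \tilde P)$ exactly as in \eqref{Cauchy-system}, now posed on $\Omega$ with the no-slip boundary condition $w|_{\p\Omega}=0$, whose strong solvability is again guaranteed by \cite{Choe-Kim}. The basic energy identity gives $\frac12\int\rho|w(T)|^2\,dx + \mu\int_0^T\|\nabla w\|_{L^2}^2\,dt \le \frac12\int\rho_0|w_0|^2\,dx \le C(\bar\rho,\Omega)\|w_0\|_{L^2}^2$, just as in \eqref{energy-3}.

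Next I would multiply the momentum equation in $\eqref{Cauchy-system}_1$ by $\p_t w$ and integrate. The key difference from the whole-space case is in the elliptic estimate: writing $-\mu\Delta w + \nabla\tilde P = -\rho\p_t w - (\rho u\cdot\nabla)w$ and applying the $L^2$-regularity theory for the Stokes operator on the bounded domain $\Omega$, one gets $\|\nabla^2 w\|_{L^2} \le C\|\rho\p_t w\|_{L^2} + C\|(\rho u\cdot\nabla)w\|_{L^2}$, and then, estimating the convective term by $\|u\|_{L^6}\|\nabla w\|_{L^3}$ and using $\|\nabla w\|_{L^6}\le C\|\nabla^2 w\|_{L^2}$ (valid on $\Omega$ for $w$ vanishing on the boundary via the full Sobolev inequality plus Poincar\'e), absorbing and using Young's inequality one recovers the analogue of \eqref{3-3-3}: $\|\nabla w\|_{L^6}\le C\|\rho\p_t w\|_{L^2} + C\|\nabla u\|_{L^2}^2\|\nabla w\|_{L^2}$. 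Plugging this into the $\p_t w$ identity gives, as in \eqref{3-3-4}, $\frac12\int\rho|\p_t w|^2\,dx + \frac{\mu}{2}\frac{d}{dt}\int|\nabla w|^2\,dx \le C\|\nabla u\|_{L^2}^4\|\nabla w\|_{L^2}^2$; Gronwall together with $A(T)\le 2$ yields $\int_0^t\int\rho|\p_t w|^2\,dx\,ds + \|\nabla w(t)\|_{L^2}^2 \le Ce^C\|\nabla w_0\|_{L^2}^2$. Then, exactly as before, the map $w_0\mapsto\nabla w$ is linear, bounded $L^2\to L^\infty(0,T;L^2)$ and $L^2\to L^2(0,T;L^2)$ (from the energy identity), and $H^1\to L^\infty(0,T;L^2)$; real interpolation via Lemmas~\ref{lemma-Besov-interpolation}--\ref{lemma-operator-interpolation} with $L^4(0,T;L^2)=[L^2(0,T;L^2),L^\infty(0,T;L^2)]_{\frac12,4}$ and $\dot H^{\frac12}\subseteq[L^2,\dot H^1]_{\frac12,4}$ delivers $\|\nabla w\|_{L^4(0,T;L^2)}\le C_1\|w_0\|_{\dot H^{\frac12}}$, with $C_1$ independent of $T$ by rescaling $[0,T]$ to $[0,1]$. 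Taking $w=u$ gives \eqref{4-4}.

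For the decay estimate \eqref{4-4-add}, I would multiply the inequality $\frac12\int\rho|\p_t u|^2\,dx + \frac{\mu}{2}\frac{d}{dt}\int|\nabla u|^2\,dx \le C\|\nabla u\|_{L^2}^6$ by $t$, obtaining $t\int\rho|\p_t u|^2\,dx + \frac{d}{dt}\!\left(t\int|\nabla u|^2\,dx\right) \le \int|\nabla u|^2\,dx + Ct\|\nabla u\|_{L^2}^6$, and then invoke Gronwall with $A(T)\le 2$ to get $\sup_{t\in[0,T]}t\|\nabla u(t)\|_{L^2}^2 \le C\int_0^T\|\nabla u\|_{L^2}^2\,dt$. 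The time integral $\int_0^T\|\nabla u\|_{L^2}^2\,dt$ is then bounded by $C\int\rho_0|u_0|^2\,dx$ using the basic energy identity \eqref{4-3-2} (here one does not even need the exponential decay, just the integrated energy balance). This gives \eqref{4-4-add}.

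The main obstacle I anticipate is the elliptic/Sobolev estimate on the bounded domain: unlike in $\mathbb{R}^3$, one must be careful that the Stokes regularity estimate $\|\nabla^2 w\|_{L^2}\le C(\|\rho\p_t w\|_{L^2}+\|(\rho u\cdot\nabla)w\|_{L^2})$ holds with a constant depending only on $\Omega$ (this requires the smoothness of $\p\Omega$ and the standard theory for the stationary Stokes system with Dirichlet data), and that the embedding $\|\nabla w\|_{L^6}\le C\|\nabla^2 w\|_{L^2}$ is applied to a function with zero boundary trace so that no lower-order $\|\nabla w\|_{L^2}$ term with uncontrolled constant sneaks in — or, if it does appear, that it is absorbed via Poincar\'e. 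All the boundary-induced lower-order terms can ultimately be controlled because $\|\sqrt\rho u\|_{L^2}$, hence $\|\nabla u\|_{L^2}$ in an integrated sense, decays exponentially by Lemma~\ref{lemma-4-3}; this is precisely why the constant $C_1$ can be kept independent of $T$.
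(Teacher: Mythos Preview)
Your approach is correct and follows the same strategy as the paper: linearize, obtain the $L^2$ and $\dot{H}^1$ endpoint bounds for the map $w_0 \mapsto \nabla w$, and then interpolate. The only noteworthy difference is that the paper retains the lower-order term $C\|\nabla w\|_{L^2}$ in the Stokes elliptic estimate (their \eqref{4-4-2}), which produces an extra $\mu\|\nabla w\|_{L^2}^2$ on the right of the differential inequality \eqref{4-4-3}; after Gronwall this is absorbed using the energy estimate and Poincar\'e, and in the time-weighted version it forces them to invoke Lemma~\ref{lemma-4-3-add} (hence indirectly the exponential decay of Lemma~\ref{lemma-4-3}) to control $\int_0^T t\|\nabla u\|_{L^2}^2\,dt$. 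By absorbing the lower-order term already at the elliptic stage via $\|\nabla w\|_{L^2}\le C\|\nabla^2 w\|_{L^2}$ (valid for $w\in H^2\cap H^1_0$, since $\|\nabla w\|_{L^2}^2=-\int w\cdot\Delta w\le C_P\|\nabla w\|_{L^2}\|\nabla^2 w\|_{L^2}$), you avoid this detour, and your proof of \eqref{4-4-add} needs only the basic integrated energy identity, as you observe.

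One small correction: the rescaling argument ``$[0,T]\to[0,1]$'' is not available on a fixed bounded domain (scaling $t$ would force scaling $x$ and hence change $\Omega$). The $T$-independence of $C_1$ holds instead because both endpoint operator norms in your argument are already $T$-independent and the norm equivalence $[L^2(0,T;L^2),L^\infty(0,T;L^2)]_{1/2,4}=L^4(0,T;L^2)$ from Lemma~\ref{lemma-vector-Lorentz-interpolation} comes with universal constants.
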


\begin{proof}The proof is similar as in the case of the whole space.
We consider the following linear system for $(w, \tilde{P})$,  
\begin{equation}\label{Dirichlet}
  \left\{ \ba
   & \rho \partial_t w - \mu \Delta w + (\rho u\cdot \nabla )w +
   \nabla \tilde{P} = 0~,\quad \mbox{in}\ \Omega\times (0, T] ~,   \\
   & {\rm div}~w = 0 ~, \qquad \mbox{in}\ \Omega\times [0, T] ~,   \\
   & w = 0 ~, \qquad    \mbox{on}\ \partial \Omega \times [0, T]~, \\
   & w(x, 0) = w_0(x)~, \qquad \mbox{in}\ \Omega ~.
  \ea \right.
\end{equation}
Suppose $w_0$ satisfies the conditions assumed of $u_0$ in
Theorem \ref{thm-Choe-Kim}, then the global existence and uniqueness
of strong solution to \eqref{Dirichlet} is known. 
Straightforward energy estimates tell that
\be\la{energy-4}
  \frac12 \int  \rho |w(T)|^2 \, dx +  \mu \int_0^T  \|\nabla w\|_{L^2}^2 \,
  dt  \leq  \frac12 \int \rho_0 |w_0|^2 \, dx \leq C(\bar{\rho})
  \|w_0\|_{L^2}^2 ~. 
\ee

Multiplying $\eqref{Dirichlet}_1$ by $\partial_t w$ and integrating
over $\Omega$, one gets by the Sobolev embedding theorem that 
\be  \la{4-4-1} \ba
   \int \rho |\partial_t w|^2 \, dx & + \frac{\mu}{2} \frac{d}{dt}
   \int |\nabla w|^2 \, dx 
   = - \int \rho u\cdot \nabla w \cdot \partial_t w \, dx \\
   \leq & C\|u\|_{L^6} \|\nabla w\|_{L^3} \|\sqrt{\rho} \partial_t w \|_{L^2} \\
\leq &  \frac14 \int \rho |\partial_t w|^2\, dx + C \|\nabla u\|_{L^2}^2 \|\nabla w\|_{L^2} \|\nabla w\|_{H^1}~.
\ea \ee
From the regularity theory for stationary Stokes system, it follows that
\be \nonumber \ba
  & \|\nabla w\|_{H^1}  \leq C\left(\|\rho \partial_t w\|_{L^2} 
     + \|(\rho u \cdot \nabla ) w\|_{L^2} \right) + C\|\nabla w\|_{L^2} \\
&\ \ \ \leq C \|\rho \partial_t w\|_{L^2} + C \|\nabla u\|_{L^2} \|\nabla w\|_{L^2}^{\frac12} \|\nabla w\|_{H^1}^{\frac12} + C\|\nabla w\|_{L^2}~,
\ea \ee
which implies
\be \la{4-4-2}
\|\nabla w\|_{H^1} \leq C \|\rho \partial_t w\|_{L^2} + C \|\nabla u\|_{L^2}^2 \|\nabla w\|_{L^2} + C \|\nabla w\|_{L^2}~.
\ee
Inserting the estimate \eqref{4-4-2} in \eqref{4-4-1}, then 
\be \la{4-4-3} \ba
   &\int \rho |\partial_t w|^2 \, dx + \mu \frac{d}{dt}   \int |\nabla w|^2 \, dx  \\ 
& \leq C \|\nabla u\|_{L^2}^4 \|\nabla w\|_{L^2}^2 + C \|\nabla u\|_{L^2}^2 \|\nabla w\|_{L^2}^2 \\
&  \leq C \|\nabla u\|_{L^2}^4 \|\nabla w\|_{L^2}^2  + \mu \|\nabla w \|_{L^2}^2~.
\ea\ee

Utilizing Gronwall's inequality and Poincar\'e inequality,
\be \la{4-4-4} \ba
  \int_0^T \|\sqrt{\rho} &\partial_t w\|_{L^2}^2\, dt + \mu \|\nabla w (T)\|_{L^2}^2  \leq Ce^C  \|\nabla w_0\|_{L^2}^2 +  C\mu \int_0^T \|\nabla w\|_{L^2}^2\, dt \\
& \leq Ce^C  \|\nabla w_0\|_{L^2}^2 + C \int \rho_0 |w_0|^2 \, dx \leq Ce^C \|\nabla w_0\|_{L^2}^2.
\ea\ee

By interpolation, $L^4(0, T; L^2) = [L^{2}(0, T; L^2 ), \
L^\infty(0, T;  L^2)]_{\frac12, 4} $ and $\dot{H}^{\frac12} \subseteq [L^2, \dot{H}^1]_{\frac12,
  4}  $, so that  
\be \la{4-4-6}
    \|\nabla w\|_{L^4(0, T; L^2)} \leq C_1 \|w_0\|_{\dot{H}^{\frac12}} ~,
\ee
which also holds for $u$.

Furthermore, multiplying \eqref{4-4-3} by $t$ we have
\be \nonumber
     t\int \rho |\partial_t u|^2 \, dx + \mu \frac{d}{dt}
     \int t |\nabla u|^2 \, dx 
   \leq \mu \int |\nabla u|^2 \, dx + Ct \|\nabla u\|_{L^2}^6  + \mu t \|\nabla u \|_{L^2}^2~.
\ee 
By Gronwall's inequality and Lemma \ref{lemma-4-3-add}, for every $t \in [0, T]$,
\be \nonumber
\int_0^t s \int \rho |\partial_t u(s)|^2 \, dx  ds+ t\|\nabla u\|_{L^2}^2 \leq C \int_0^t  (1+s)\|\nabla u\|_{L^2}^2 \, ds 
\leq C \int \rho_0 |u_0|^2 \, dx, 
\ee
which completes the proof.

\end{proof}

The proof of Theorem~\ref{prop-4-1} is thus complete, if we 
set $\varepsilon \leq 1/C_1$. Finally, the proof of Theorem~\ref{main-result} 
for bounded domain case follows the same logic as the analog result in
section 3, we omit the details for reasons of brevity.

\end{document}